\newcommand{\ex}{\mathcal{E}} 
\newcommand{\T}{\tau}  
\newcommand{\timess}{\bar \times}
\newcommand{\E}{\textbf{E}}
\newcommand{\N}{\mathbb{N}}
\newcommand{\SN}{\mathfrak{S}}
\newcommand{\DN}{\mathfrak{D}}
\newtheorem{theorem}{Theorem}
\newtheorem{lemma}[theorem]{Lemma}
\newtheorem{proposition}[theorem]{Proposition}
\newtheorem{definition}[theorem]{Definition}
\newtheorem{remark}[theorem]{Remark}
\newcommand{\PP}{\textbf{P}}
\begin{document}

\title[A remark on proper partitions of unity] {A
remark on proper partitions of unity.}

\author{Jose M. Garc\'{\i}a Calcines}

\subjclass[2000]{55P57, 55R70, 18A99}

\keywords{Proper category, exterior spaces, exterior numerable
coverings, fibrewise proper homotopy equivalences.}

\address{J.M. Garc\'{\i}a Calcines \newline \indent
Departamento de Matem\'{a}tica Fundamental \newline \indent
Universidad de La Laguna \newline \indent 38271 La Laguna, Spain.}

\email{jmgarcal@ull.es}

\thanks{This work has been supported by the \emph{Ministerio de
Educaci\'on y Ciencia} grant MTM2009-12081 and FEDER}

\maketitle

\begin{abstract}
In this paper we introduce, by means of the category of exterior
spaces and using a process that generalizes the Alexandroff
compactification, an analogue notion of numerable covering of a
space in the proper and exterior setting. An application is given
for fibrewise proper homotopy equivalences.
\end{abstract}

\section*{Introduction.}
The notion of numerable covering of a space is a useful tool in
order to obtain global results from local data. Among the most
successful results in this sense for homotopy theory we can
mention the papers of A. Dold \cite{D} and T. tom Dieck \cite{tD},
in which it is shown that being a homotopy equivalence or a
fibration is a local property.
The aim of this paper is to establish an analogous notion of
numerable covering in the category $\mathbf{P}$ of spaces and
proper maps and to give an application. In order to do so we begin
in Section 1 by giving some preliminaries definitions and results
that will be used throughout the paper. The most important tool is
the category of \emph{exterior spaces} \cite{Ext_1}. An exterior
space is nothing else but a topological space together with a
distinguished collection of open subsets verifying certain natural
conditions that capture the behavior of a neighborhood system ('at
infinity'). Then the category $\mathbf{E}$ of exterior spaces and
exterior maps appears, containing $\mathbf{P}$ as a full
subcategory. Moreover, unlike $\mathbf{P},$ the category of
exterior spaces has better categorical properties, such as having
all limits and colimits. It is for all these reasons that we have
chosen $\mathbf{E}$ as a framework for our study in $\mathbf{P}.$

In Section 2 we establish the main notion of the paper, which is
the one of proper (and exterior) numerable covering. In order to
do this, we consider a more manageable description of the category
of exterior spaces, based on the Alexandrov one-point
compactification. Then it is shown that the notion of proper
numerable (for short, p-numerable) covering is not very
restrictive. Indeed, at the end of the section we obtain the
following result:

\medskip \textbf{Proposition}. Let $X$ be a finite dimensional
locally finite CW-complex (for instance, any open differentiable
$n$-manifold or any open PL $n$-manifold). If $\{X_{\alpha
}\}_{\alpha \in A}$ is a covering of $X$ such that the family of
their interiors $\{\mbox{int}(X_{\alpha })\}_{\alpha \in A}$ also
covers $X$ and there exists $\alpha \in A$ such that the
complement $X\setminus \mbox{int}(X_{\alpha })$ is compact, then
$\{X_{\alpha }\}_{\alpha \in A}$ is a p-numerable covering of $X.$

\medskip
Finally, using such notion of p-numerable covering, we establish
in Section 3, an application. Consider $f:X\rightarrow Y$ any
proper map over a fixed space $B,$ that is, a commutative diagram
in $\mathbf{P}$:
$$\xymatrix{
{X} \ar[rr]^f \ar[dr]_p & & {Y} \ar[dl]^q \\
 & {B}  & }$$
\medskip

\textbf{Theorem.} If $\{B_{\alpha }\}_{\alpha \in A}$ is a closed
p-numerable covering of $B$ and each restriction
$$\xymatrix{
{p^{-1}(B_{\alpha })} \ar[rr]^{f_{\alpha }} \ar[dr]_{p_{\alpha }}
& & {q^{-1}(B_{\alpha })} \ar[dl]^{q_{\alpha }} \\
 & {B_{\alpha }}  & }$$ is a proper homotopy equivalence over $B_{\alpha },$
then $f:X\rightarrow Y$ is a proper homotopy equivalence over $B.$

\section{Preliminaries. Proper category and exterior spaces.}

Recall that a \textit{proper} map is a continuous map
$f:X\rightarrow Y$ such that $f^{-1}(K)$ is a compact subset of
$X,$ for every closed compact subset $K$ of $Y.$ We will denote by
$\mathbf{P}$ the category of spaces and proper maps. Proper
homotopy is defined in a natural way.

As it is well known, the category $\mathbf{P}$ has not good
categorical properties such as limits and colimits. Therefore,
many constructions can not be considered in this setting. In order
to palliate this problem in $\mathbf{P}$, exterior spaces were
introduced in \cite{Ext_1}.

\begin{definition}{\rm \cite{Ext_1} ${}$
An \textit{exterior space} $(X,\ex \subseteq \T)$ consists of a
topological space $(X,\T)$ together with a non empty family of
open sets $\ex$, called \textit{externology} which is closed by
finite intersections and, whenever $U \supseteq E$, $E \in \ex$,
$U\in \T$, then $U\in \ex$.
 We call \textit{exterior open} subset, or in short,
\textit{e-open} subset, any element $E\in \ex$. A map between
exterior spaces $f:(X,\ex \subseteq \T) \rightarrow (X',\ex'
\subseteq \T')$ is said to be \textit{exterior} if it is
continuous and $f^{-1}(E) \in \ex$, for all $E \in \ex'$.

The category of exterior spaces will be denoted by $\E$.}
\end{definition}

For a given topological space $X$ we can consider its
\textit{cocompact externology} $\ex_{cc}$ which is formed by the
family of the complements of all closed-compact subsets of $X$.
The corresponding exterior space will be denoted by $X_{cc}$. The
correspondence $X\mapsto X_{cc}$ gives rise to a full embedding
\cite[Thm. 3.2]{Ext_1}:
$$(-)_{cc}\colon \PP \hookrightarrow \E$$

Furthermore, the category $\E$ is complete and cocomplete
\cite[Thm. 3.3]{Ext_1}. For instance, the pushout in $\E$ of
 $f:X\rightarrow Y$ and $g:Y\rightarrow Z$, is the topological pushout
 $$\xymatrix{
  X \ar[d]_f \ar[r]^g
                & Z \ar[d]^{\overline{f}}  \\
  Y  \ar[r]_{\overline{g}}
                & {Y\cup _X Z}             }$$
equipped with the \textit{pushout} externology, given by those
$E\subseteq Y\cup _X Z$ for which ${\overline{g}}^{-1}(E)$ and
${\overline{f}}^{-1}(E)$ are e-open. Another example is the
\textit{product} externology in $Y\times Z$, which consists of
those open subsets which contain a product $E\times E'$ of e-open
subsets of $Y$ and $Z$ respectively. More generally, the pullback
in $\E$ of $f:Y\rightarrow X$ and $g:Z\rightarrow X$ is the
topological pullback $Y\times _X Z$ endowed with the relative
externology induced by $Y\times Z.$ (In general, the
\textit{relative} externology in $A\subseteq X$ is given by $\ex
_A:=\{E\cap A,\,\,E\in\ex _X\}$).


Now we introduce the following functorial construction that can be
made in this setting:
\begin{definition}{\rm \cite{Ext_1}
 Let $X$ and $Y$ be  an exterior and a topological space respectively.
 On the product space $X\times Y$ consider the following
externology: an open set $E$ is exterior if for each
 $y\in Y$ there exists an open neighborhood of  $y$, $U_y$, and an exterior open
 $E_y$ such that $E_y \times U_y \subset E$.
Then $X\timess Y$ will denote the resulting exterior space. }
\end{definition}

\begin{remark}\label{restr-prop-cil}
If $Y$ is compact, then it is not difficult to check that $E$ is
an exterior open in $X\timess Y$ if and only if it is an open set
and there exists $G\in \ex_X$ for which $G\times Y\subset E$. In
particular, if $\ex_X=\ex_{cc}^X$ and $Y$ is compact, then
$X_{cc}\timess Y=(X\times Y)_{cc}$.
\end{remark}



There is a \textit{cylinder functor} $- \timess I:\E \rightarrow
\E$ with natural transformations $\imath_0, \, \imath_1\colon id
\rightarrow - \timess I$ and $\rho\colon - \timess I \rightarrow
id$ obviously defined.
This construction provides a natural way to define exterior
homotopic maps ($f\simeq _eg$) in $\E$. The notion of
\emph{exterior homotopy equivalence} comes naturally.

Observe that, since $X_{cc}\timess I=(X\times I)_{cc}$ (by remark
\ref{restr-prop-cil} above), the cylinder functor may be
restricted to the proper case, $- \timess I=- \times I:\mathbf{P}
\rightarrow \mathbf{P}.$ This functor is the one used in
\cite{Ay-D-Q} to define an $I$-category structure (in the sense of
Baues) on $\mathbf{P}_{\infty }$ using proper cofibrations.

\medskip
We can also consider the category $\mathbf{E}_B$ of exterior
spaces over a fixed object $B.$ Its objects are exterior maps
$X\rightarrow B$, called \emph{exterior spaces over $B,$} and its
morphisms, called \emph{exterior maps over $B$} are commutative
triangles in $\mathbf{E}:$
$$\xymatrix{
{X} \ar[rr]^f \ar[dr]_p & & {Y} \ar[dl]^q \\
 & {B}  & }$$
\begin{remark}
The exterior spaces and exterior maps over $B$ are also called
\emph{fibrewise exterior spaces} and \emph{fibrewise exterior
maps}, respectively.
\end{remark}

Given $f,g:X\rightarrow Y$ two exterior maps over $B$, then
\emph{$f$ is homotopic to $g$ over $B$} (or \emph{fibrewise
homotopic to $g$}), denoted $f\simeq _B g$, if there exists
$H:X\times I\rightarrow Y$ a homotopy over $B$ between $f$ and
$g.$ This means that $H$ is an exterior map such that
$qH(x,t)=p(x)$ and $H(x,0)=f(x),H(x,1)=g(x),$ for all $x\in X$ and
$t\in I.$ The homotopy over $B$ is an equivalence relation,
compatible with the composition of morphisms. The notion of\emph{
homotopy equivalence over $B$} (or \emph{fibrewise homotopy
equivalence}) is naturally defined. The fibrewise notions can be
restricted to the category $\mathbf{P}$ of spaces and proper maps.


Now we give a more manageable description of the category of
exterior spaces. Such description will be crucial for our main
notion, given in the next section. We shall consider what we call
the category of $\infty $-spaces.

\begin{definition}
{\rm An \emph{$\infty $-space} is a pointed space $(X,x_{0})$ such
that $\{x_{0}\}$ is closed in $X.$ An \emph{$\infty $-map} is a
pointed map $f:(X,x_{0})\to (Y,y_{0})$ verifying
$f^{-1}(\{y_{0}\})=\{x_{0}\}.$ We write ${\bf Top}^\infty $ for
the corresponding category of $\infty $-spaces and $\infty $-maps.
}
\end{definition}

\begin{proposition}{\rm
There is an equivalence of categories ${\bf E}\simeq {\bf
Top}^\infty .$}
\end{proposition}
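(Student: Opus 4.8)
The plan is to exhibit two functors inverse to each other up to natural isomorphism. In one direction I would adjoin a single point ``at infinity'' whose neighbourhood filter is exactly the externology. Concretely, to an exterior space $(X,\ex\subseteq\T)$ I associate the pointed space $X^{+}:=X\sqcup\{\infty\}$ whose open sets are the members of $\T$ together with the sets $\{\infty\}\cup E$ for $E\in\ex$. First I would check this really is a topology: stability under finite intersection uses that $\ex$ is closed under finite intersections, while stability under arbitrary unions and the compatibility of the sets $\{\infty\}\cup E$ with the ordinary opens both rest on the saturation axiom ($U\supseteq E,\ U\in\T\Rightarrow U\in\ex$). Since $\ex\neq\varnothing$ forces $X\in\ex$, the whole space is open and $\{\infty\}$ is closed (its complement $X$ lies in $\ex\subseteq\T$), so $(X^{+},\infty)$ is an $\infty$-space. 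On morphisms, an exterior map $f\colon X\to X'$ extends by $\infty\mapsto\infty'$; continuity of the extension is precisely the condition $f^{-1}(E')\in\ex$, and $f^{-1}(\{\infty'\})=\{\infty\}$ holds automatically because $f(X)\subseteq X'$. This defines a functor $\Phi\colon\E\to{\bf Top}^\infty$.

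In the other direction I would delete the basepoint. Given an $\infty$-space $(X,x_{0})$, set $\Psi(X,x_{0})$ to be the subspace $X\setminus\{x_{0}\}$ equipped with the externology $\ex:=\{\,U\setminus\{x_{0}\}\ :\ U\ \text{open in}\ X,\ x_{0}\in U\,\}$, i.e.\ the traces on $X\setminus\{x_{0}\}$ of the open neighbourhoods of $x_{0}$. Here closure under finite intersection is immediate, and the saturation axiom is the one computation that needs care: given $E=U\setminus\{x_{0}\}\in\ex$ and an open $V\supseteq E$ in $X\setminus\{x_{0}\}$, writing $V=V'\setminus\{x_{0}\}$ with $V'$ open in $X$ and replacing $V'$ by $V'\cup U$ produces an open neighbourhood of $x_{0}$ whose trace is exactly $V$. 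An $\infty$-map $f$ restricts to a map $X\setminus\{x_{0}\}\to Y\setminus\{y_{0}\}$ because $f^{-1}(\{y_{0}\})=\{x_{0}\}$, and it is exterior since $f^{-1}(V)$ is an open neighbourhood of $x_{0}$ whenever $V$ is one of $y_{0}$. This gives $\Psi\colon{\bf Top}^\infty\to\E$.

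Finally I would verify that the two composites are the identity up to natural isomorphism. The composite $\Psi\Phi$ is the identity on the nose: deleting $\infty$ from $X^{+}$ returns $X$ with its original topology, because the subspace opens are $U\cap X=U$ and $(\{\infty\}\cup E)\cap X=E$, and the open neighbourhoods of $\infty$ are exactly the sets $\{\infty\}\cup E$, so their traces recover $\ex$. For $\Phi\Psi$ the map sending the adjoined point back to $x_{0}$ and fixing $X\setminus\{x_{0}\}$ is a bijection, and I would check it is a homeomorphism by matching opens in both directions: open sets of $X$ avoiding $x_{0}$ correspond to opens of the subspace, while an open neighbourhood $U$ of $x_{0}$ corresponds to $\{\infty\}\cup(U\setminus\{x_{0}\})$; naturality of this homeomorphism in $(X,x_{0})$ is routine. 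The main obstacle is not conceptual but bookkeeping: the delicate point in both constructions is the interaction between the saturation axiom of the externology and the topology near the distinguished point, and one must ensure that ``neighbourhood of $\infty$'' and ``e-open set'' are made to correspond exactly, with neither too many nor too few open sets declared at the added point.
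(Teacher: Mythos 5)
Your proposal is correct and follows essentially the same route as the paper: adjoining a closed point $\infty$ whose neighbourhoods are the sets $E\cup\{\infty\}$ with $E\in\ex$, and inverting by deleting the basepoint and taking the externology to be the traces of its open neighbourhoods. You simply spell out the verifications (topology axioms, saturation, the two composites) that the paper leaves to the reader.
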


\begin{proof}
If $(X,\varepsilon _{X}\subset \tau _{X})$ is an exterior space
and $\infty$ is a point which does not belong to $X,$ then we
consider the pointed space $X^\infty=X\cup\{\infty\}$ with base
point $\infty$, equipped with the topology $$\tau^\infty=\tau
_{X}\cup \{E\cup\{\infty\}:E\in \varepsilon _{X}\}$$ Given $f$ any
exterior map, $f^{\infty }$ is obviously defined. Thus we obtain a
functor $(-)^{\infty }:{\bf E}\rightarrow {\bf Top}^\infty,$ which
is an equivalence of categories. Indeed, the quasi-inverse of
$(-)^{\infty }$ is the functor ${\bf Top}^{\infty }\rightarrow
{\bf E}$ defined as follows:

Let $(X,x_{0})$ be any object in ${\bf Top}^{\infty };$ then we
take the exterior space $\bar{X}=X\setminus \{x_{0}\}$ whose
topology and externology are given as
$$\tau _{\bar{X}}=\{A\setminus \{x_{0}\}:A\in \tau _{X}\},$$
$$\varepsilon _{\bar{X}}=\{A\setminus \{x_{0}\}:A\in \tau _{X}, x_{0}\in A\}.$$
Observe that $\tau _{\bar{X}}\subset \tau _{X}$ since $X\setminus
\{x_{0}\}\in \tau _{X}.$ The definition on morphisms is given by
the obvious restriction.
\end{proof}

The functor $(-)^{\infty }$ is closely related to the Alexandroff
compactification functor. Indeed, if $X$ is any topological space
and we consider $X_{cc}$ the cocompact externology, then it is
clear that $(X_{cc})^{\infty }=X^+$ is the Alexandroff
compactification of $X.$ There is a commutative diagram of
functors and categories
$$\xymatrix{
  {\mathbf{P}} \ar[dr]_{(-)^+} \ar@{^{(}->}[r]^{(-)_{cc}}
                & {\mathbf{E}} \ar[d]^{(-)^\infty}_{\simeq }  \\
                & {\mathbf{Top}^\infty}  }$$
\noindent where $(-)^+:\mathbf{P}\rightarrow
\mathbf{Top^{\infty}}$ is the functor induced by the Alexandroff
compactification construction. Consequently, the functor $(-)^+$
is a full embedding; furthermore, $(-)^+$ induces an equivalence
$$\mathbf{P}_{lcH}\simeq \mathbf{Top}^{\infty }_{cH}$$ between the
full subcategory $\mathbf{P}_{lcH}$ of $\mathbf{P}$ whose objects
are locally compact Hausdorff spaces and the full subcategory
$\mathbf{Top}^{\infty }_{cH}$ of $\mathbf{Top}^{\infty }$ whose
objects are based compact Hausdorff spaces. Here, the condition of
being Hausdorff cannot be removed. For instance, if $\mathbf{2}_S$
denotes the space given by the set $\mathbf{2}=\{0,1\}$ with the
Sierpinski topology $\tau =\{\emptyset ,\mathbf{2},\{0\}\},$ then
$\mathbf{2}_S$ does not come from the Alexandroff
compactification.


\section{Proper and exterior numerable coverings.}

In this section we will establish the notion of proper (more
generally, exterior) numerable covering of an exterior space. In
order to obtain such notion we need the functor $(-)^{\infty
}:{\bf E}\rightarrow {\bf Top}^\infty .$ Recall that a (not
necessarily open) covering $\mathcal{U}$ of a space $X$ is said to
be \emph{numerable} if $\mathcal{U}$ admits a refinement by a
partition of unity (see \cite{D} or \cite{D-K-P} for more
details). It is well known that any open covering of a paracompact
space is numerable.

\begin{definition}\label{e-numerable}{\rm
Let $X$ be an exterior space and $\mathcal{U}=\{U_i\}_{i\in I}$ a
covering of $X.$ Then $\mathcal{U}$ is said to be an
\emph{exterior numerable covering} (for short, \emph{e-numerable
covering}) of $X$ if
$$\mathcal{U}^{\infty }=\{U_i\cup \{\infty \}\}_{i\in I}$$
\noindent is a numerable covering of the topological space
$X^{\infty }.$ In particular, when $X$ is a cocompact exterior
space (i.e., $X$ has the cocompact externology), then we say that
$\mathcal{U}$ is a \emph{proper numerable covering} (or
\emph{p-numerable covering}) of $X$.}
\end{definition}

Although at first sight the notion of e-numerable covering might
seem too restrictive we will see that it turns out to be far from
the case. Indeed, we will prove that for a wide class of exterior
spaces $X$, namely the exterior CW-complexes, we have that
$X^{\infty }$ is a paracompact space. Therefore, any open covering
of $X$ is an e-numerable covering, as long as there exists a
member of the covering which is an exterior open subset.

\begin{definition}\cite{Ext_2}
{\rm An \emph{exterior CW-complex} consists of an exterior space
$X$ together with a filtration $\emptyset =X^{-1} \subset X^0
\subset X^1 \subset \ldots \subset X_n\subset \ldots$, such that
$X$ is the colimit of this filtration and for each $n\geq 0$,
$X^n$ is obtained from $X^{n-1}$ by an exterior pushout of the
form
$$\xymatrix@C=1.6cm@R=1cm{
\sqcup_{\gamma \in \Gamma} \SN^{n-1}_\gamma \ar@{^{(}->}[d]
\ar[r]^{\sqcup_{\gamma \in \Gamma}\varphi_\gamma} & X^{n-1} \ar@{^{(}->}[d] \\
\sqcup_{\gamma \in \Gamma} \DN^{n}_\gamma \ar[r]^{\sqcup_{\gamma
\in \Gamma}\psi_\gamma} & X^n   }$$  Here $\SN^k$ denotes either
the $k$-dimensional sphere $S^k$ (with its topology as
externology) or the $k$-dimensional $\N$-sphere $\N\timess S^{k}$.
Analogously $\DN^k$ denotes either the classical disc $D^k$ or
$\N\timess D^{k},$ the $\N$-disc. We point out that the inclusion
$\SN^{k-1} \hookrightarrow \DN^{k}$ means either
$S^{k-1}\hookrightarrow D^k$ or $\N\timess S^{k-1}\hookrightarrow
\N\timess D^{k}.$}
\end{definition}

Observe that every classical CW-complex $X$ with its topology as
externology is an exterior CW-complex. Moreover, the class of
exterior CW-complexes contains many spaces in $\mathbf{P};$ for
instance, if $X$ is any finite dimensional locally finite
CW-complex, then $X_{cc}$ has the structure of an exterior
CW-complex. In order to see this fact one has just to take into
account the following result:
\begin{lemma}\label{push}{\rm \cite[Prop. 2.3]{G-G-M}
Consider the pushout in $\E$ of 
$f:A\rightarrow X_{cc}$ and $g:A\rightarrow Y_{cc}$
$$\xymatrix{
  A \ar[d]_{f} \ar[r]^{g}
                & {Y_{cc}} \ar[d]  \\
  {X_{cc}} \ar[r]  &  {X_{cc}\cup _A Y_{cc}}           }$$
in which  $X,Y$ are Hausdorff, locally compact spaces, and $A$ is
a Hausdorff, locally compact exterior space.
 Then, the pushout externology is contained in the cocompact
externology in $X\cup _A Y.$ Furthermore, if $f$ and $g$ are
proper maps and $f$ (or $g$) is injective, then $X_{cc}\cup _A
Y_{cc}=(X\cup _A Y)_{cc}.$ \hfill $\square $}
\end{lemma}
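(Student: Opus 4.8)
The plan is to prove the two asserted inclusions of externologies on the topological pushout $P:=X\cup_A Y$ separately. Write $i_X\colon X\to P$ and $i_Y\colon Y\to P$ for the canonical maps and $q\colon X\sqcup Y\to P$ for the quotient map, so that $q|_X=i_X$ and $q|_Y=i_Y$. By definition a set $E\subseteq P$ lies in the pushout externology precisely when it is open and both $i_X^{-1}(E)$ and $i_Y^{-1}(E)$ are cocompact (have closed compact complements) in $X$ and $Y$, whereas $E$ lies in the cocompact externology of $P$ precisely when $P\setminus E$ is closed and compact.

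First I would dispatch the inclusion ``pushout externology $\subseteq$ cocompact externology,'' which requires no extra hypotheses. Given $E$ in the pushout externology, set $C=P\setminus E$; this is closed since $E$ is open. As $i_X,i_Y$ are jointly surjective, $C=i_X(i_X^{-1}(C))\cup i_Y(i_Y^{-1}(C))$, and each of $i_X^{-1}(C)=X\setminus i_X^{-1}(E)$ and $i_Y^{-1}(C)$ is compact by hypothesis; since a continuous image of a compact set is compact and a finite union of compacts is compact, $C$ is compact. Hence $E$ is cocompact.

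The substance is the reverse inclusion under the added hypotheses; by symmetry I would assume $f$ is injective. First I would record that, since $X$ and $Y$ are locally compact Hausdorff and $f,g$ are proper, both $f$ and $g$ are closed maps (a proper map into a locally compact Hausdorff space is closed), so $f$, being a closed injective continuous map, is a closed embedding with $f(A)$ closed in $X$. The crux is then to show that $q\colon X\sqcup Y\to P$ is proper, for then $q^{-1}(C)$ is compact whenever $C\subseteq P$ is compact, and its two clopen pieces $i_X^{-1}(C)=q^{-1}(C)\cap X$ and $i_Y^{-1}(C)=q^{-1}(C)\cap Y$ are compact — exactly what places $P\setminus C$ in the pushout externology. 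To see $q$ is proper I would verify the two Bourbaki conditions: (i) $q$ has quasi-compact fibres, and (ii) $q$ is a closed map; a closed continuous map with quasi-compact fibres is proper, whence preimages of compact sets are compact. For (i), using that $f$ is injective one computes the fibre of $q$ over $i_Y(y)$ to be $\{y\}\cup f(g^{-1}(y))$ and over $i_X(x)$ with $x\notin f(A)$ to be $\{x\}$; these are compact because $g$ is proper. For (ii), I would compute the saturation of a closed set $F=F_X\sqcup F_Y$, namely $\bigl(F_X\cup f(g^{-1}(F_Y))\bigr)\sqcup\bigl(F_Y\cup g(f^{-1}(F_X))\bigr)$, and observe it is closed because $f$ and $g$ are closed maps; by definition of the quotient topology this says $q$ is closed.

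The main obstacle is step (ii) together with the fact that $P$ need not be Hausdorff: one cannot simply argue that $i_X,i_Y$ are closed embeddings and invoke ``closed embeddings are proper,'' since $i_X$ fails to be injective when $g$ is not injective. Routing the argument through $q$ and the Bourbaki characterisation of properness (closed map with quasi-compact fibres) avoids any Hausdorffness assumption on $P$ and pinpoints where the hypotheses are used: injectivity of $f$ makes both the fibre computation and the saturation formula clean, while properness of $f$ and $g$ — via local compactness and Hausdorffness of $X,Y$ — is what forces $f,g$ to be closed and the fibres to be compact. Combining the two inclusions then yields $X_{cc}\cup_A Y_{cc}=(X\cup_A Y)_{cc}$.
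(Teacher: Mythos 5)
Your overall strategy is sound: the unconditional inclusion by pushing the compact complements forward along the jointly surjective maps $i_X,i_Y$, and the reverse inclusion by showing the quotient map $q\colon X\sqcup Y\to X\cup_A Y$ is proper via the Bourbaki criterion (closed with quasi-compact fibres). Your first inclusion, the observation that $f,g$ are closed maps, and the fibre computation are all correct. (The paper itself imports this lemma from \cite{G-G-M} without proof, so there is no in-paper argument to compare against.)

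There is, however, a genuine error in step (ii), which is where the whole weight of the argument rests. Your formula for the saturation of $F=F_X\sqcup F_Y$ is wrong: it is not $\bigl(F_X\cup f(g^{-1}(F_Y))\bigr)\sqcup\bigl(F_Y\cup g(f^{-1}(F_X))\bigr)$. Take $A=\{a,a'\}$, $X=\{x,x'\}$, $Y=\{y\}$ all discrete, $f(a)=x$, $f(a')=x'$, $g\equiv y$, and $F=\{x\}\sqcup\emptyset$; then $x\sim y\sim x'$, so the saturation is all of $X\sqcup Y$, whereas your formula returns $\{x\}\sqcup\{y\}$ and omits $x'$. In general you are missing the term $f\bigl(g^{-1}(g(f^{-1}(F_X)))\bigr)$ on the $X$-side, so the set you prove to be closed is not $q^{-1}(q(F))$ and closedness of $q$ does not follow from what is written. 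The repair is local: putting $F_Y'=F_Y\cup g(f^{-1}(F_X))$, the saturation is
$$\bigl(F_X\cup f(g^{-1}(F_Y'))\bigr)\ \sqcup\ F_Y',$$
and injectivity of $f$ is exactly what guarantees that this set is already saturated (each $f(a)$ is related to the single point $g(a)$ of $Y$, so the zigzag stabilises after this one extra round); it is closed by the very argument you give, since $f$ and $g$ are closed maps. With that correction the rest of your proof goes through.
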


In particular, suppose that $M$ is any open differentiable
$n$-manifold or any open PL $n$-manifold. As a differentiable
manifold $M$ admits a triangulation and therefore a structure of a
finite dimensional locally finite CW-complex. Consequently
$M_{cc}$ can be seen as an exterior CW-complex.

\begin{remark}
If $X$ is a strongly locally finite CW-complex (not necessarily
finite dimensional), then each skeleton $X^n$ equipped with its
cocompact externology is an exterior CW-complex. Furthermore, $X$
together with the colimit externology (i.e., the externology given
by those open subsets $E\subset X$ such that $E\cap X^n$ is
cocompact in $X^n,$ for all $n$) has the structure of an exterior
CW-complex. Note that, in the non finite dimensional case, the
colimit externology in $X$ need not agree with the cocompact
externology and therefore $X$ might not be considered as an space
in $\mathbf{P}$.
\end{remark}

 For an exterior CW-complex $X$, $X^{\infty }$ need not be
a classical CW-complex. For instance, if $X=\N\timess S^{k}$ we
have that $X^{\infty }=X^+$ is the Alexandorff compactification,
which is not a CW-complex since the property of being locally
contractible fails at $\infty \in X^+$ (see the figure below) \
\par \vspace{15pt}
\centerline{\epsfbox{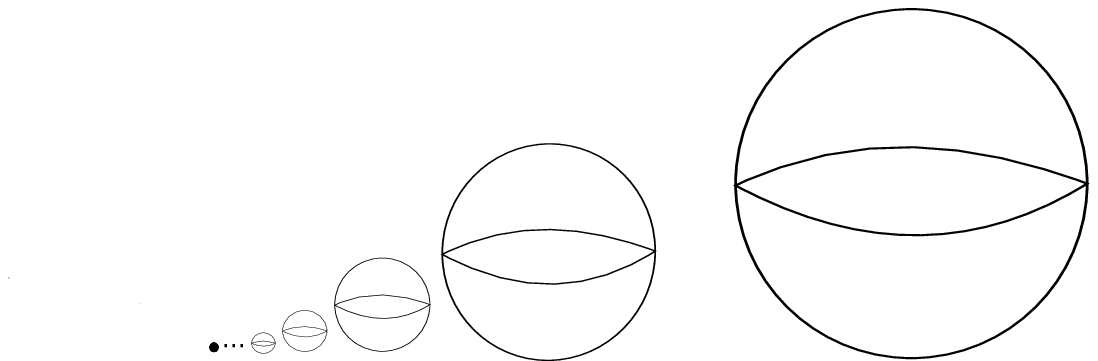}}\vspace{15pt} What is clear is
that $(\SN^k)^{\infty }=(\SN^k)^{+}$ and $(\DN^k)^{\infty
}=(\DN^k)^{+}$ are paracompact Hausdorff spaces (and therefore
normal spaces). Next we state some technical results that will
help us to reach our aim. Given $m$ any infinite cardinal number,
a topological space $X$ is said to be $m$-\emph{paracompact} if
for any open covering with cardinal $\leq m$ it admits a locally
finite refinement. Then $X$ is paracompact if and only if $X$ is
$m$-paracompact for any infinite cardinal number $m$.

A topological space $X$ is said to have the \emph{weak topology}
with respect to a closed covering $\{A_{\lambda }\}_{\lambda \in
\Lambda }$ if for any $\Lambda '\subset \Lambda $ we have that
every subset $C\subset \bigcup _{\lambda \in \Lambda '} A_{\lambda
}$ which verifies that $C\cap A_{\lambda }$ is closed for all
$\lambda \in \Lambda '$ then $C$ is closed in $X.$ Every
topological space has the weak topology with respect to any
locally finite closed covering.

\begin{lemma}{\rm \label{primero}\cite[Th.3.1]{Mo}
 If a topological space $X$ has the weak topology with respect
to a closed covering $\{A_{\alpha }\}_{\alpha \in A}$ where each
$A_{\alpha }$ is $m$-paracompact and normal, then $X$ is also
$m$-paracompact and normal. ${}$\hfill $\square $ }
\end{lemma}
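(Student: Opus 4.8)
The plan is to establish the two conclusions in turn, first that $X$ is normal and then that it is $m$-paracompact, in both cases by well-ordering the index set $A=\{\alpha:\alpha<\kappa\}$ and running a transfinite recursion over the partial unions $A_{<\beta}:=\bigcup_{\alpha<\beta}A_{\alpha}$. What makes this machinery available is that the weak-topology hypothesis is imposed for \emph{every} subfamily of $\{A_{\alpha}\}$: applying it to the subfamily indexed by $\{\alpha<\beta\}$ and to the set $C=A_{<\beta}$, whose intersection with each $A_{\alpha}$ (for $\alpha<\beta$) is the closed set $A_{\alpha}$ itself, shows that every $A_{<\beta}$ is closed in $X$; the same clause, applied to sub-subfamilies, shows that $A_{<\beta}$ again carries the weak topology with respect to $\{A_{\alpha}\}_{\alpha<\beta}$. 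Thus the partial unions form an increasing transfinite filtration of $X$ by closed subspaces, each inheriting the hypotheses of the lemma, and continuity of a map defined on $A_{<\beta}$ can always be checked piecewise on the $A_{\alpha}$.

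For normality, given disjoint closed sets $F,G\subset X$, I would build a Urysohn function $f\colon X\to[0,1]$ with $f\equiv 0$ on $F$ and $f\equiv 1$ on $G$ by recursion on $\beta$. Assume a continuous $f_{\beta}\colon A_{<\beta}\to[0,1]$ separating $F\cap A_{<\beta}$ from $G\cap A_{<\beta}$ has been constructed. The subset $(A_{<\beta}\cap A_{\beta})\cup(F\cap A_{\beta})\cup(G\cap A_{\beta})$ is closed in $A_{\beta}$, and the function equal to $f_{\beta}$ on the first piece, to $0$ on $F\cap A_{\beta}$ and to $1$ on $G\cap A_{\beta}$ is well defined and continuous there; since $A_{\beta}$ is normal, the Tietze extension theorem extends it to a map $A_{\beta}\to[0,1]$. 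As $A_{<\beta}$ and $A_{\beta}$ are both closed and the two functions agree on their overlap, the pasting lemma produces a continuous extension $f_{\beta+1}$ on $A_{\le\beta}$. At a limit ordinal the $f_{\alpha}$ are mutually compatible by construction and glue to a single $f_{\beta}$ on $A_{<\beta}$ whose continuity is exactly the weak-topology condition, since the preimage of any closed set meets each $A_{\alpha}$ in a closed set. The terminal function $f_{\kappa}$ then separates $F$ from $G$, so $X$ is normal.

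For $m$-paracompactness I would combine the normality just proved with the standard fact that a normal space is $m$-paracompact precisely when every open cover of cardinality $\le m$ admits a locally finite open (equivalently cozero) refinement, or a subordinate locally finite partition of unity. Starting from $\mathcal{U}=\{U_{s}\}_{s\in S}$ with $|S|\le m$, each trace $\{U_{s}\cap A_{\beta}\}_{s\in S}$ is an open cover of the $m$-paracompact normal space $A_{\beta}$, hence carries a locally finite partition of unity subordinate to it, and the transfinite recursion would splice these local data into a global partition of unity subordinate to $\mathcal{U}$, using normality of $X$ through Urysohn functions to shrink and rescale supports at successor stages and the weak topology to guarantee continuity and local finiteness at limit stages. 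I expect this final splicing to be the genuine obstacle: in contrast with the normality argument, where a single real-valued function is repeatedly extended, here one must maintain local finiteness of the \emph{accumulated} family of supports across all $\kappa$ stages and ensure that the partial sums of the partition functions remain well behaved in the limit, so the bookkeeping that keeps only finitely many globally defined functions nonzero near each point of $X$ is where the real care is needed. This is the content of the cited theorem of Morita \cite[Th.3.1]{Mo}, which we are content to quote.
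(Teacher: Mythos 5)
The paper offers no proof of this lemma: it is quoted verbatim from Morita \cite[Th.~3.1]{Mo} and closed with a box, so there is no internal argument to compare yours against. Judged as a self-contained proof, your proposal divides into a half that works and a half that is missing. The normality half is correct and is the standard argument: the weak-topology hypothesis, applied to subfamilies, does show that each partial union $A_{<\beta}$ is closed in $X$ and again carries the weak topology with respect to $\{A_{\alpha}\}_{\alpha<\beta}$; the successor step needs only the two-set pasting lemma (for the closed pair $A_{<\beta}$, $A_{\beta}$) plus Tietze on the normal space $A_{\beta}$, and the limit step is exactly where the weak topology is invoked. That part I would accept as written.

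The $m$-paracompactness half, however, is not a proof. You correctly identify the splicing of the stagewise partitions of unity --- keeping the accumulated family of supports locally finite across all $\kappa$ stages --- as the genuine difficulty, and then declare yourself ``content to quote'' Morita's theorem for it. But Morita's Th.~3.1 \emph{is} the statement being proved, so this is circular: you have described the shape of the argument while omitting the one idea that makes it close. (In Morita's and Michael's treatments the point is to arrange that the functions, or refinement members, introduced at stage $\beta$ vanish on a neighbourhood of $A_{<\beta}$ inside $A_{\beta}$ --- using the normality already established --- so that near any $x\in X$ only the stages up to the least $\alpha$ with $x\in A_{\alpha}$, together with finitely many members from that stage, can contribute; without some such device local finiteness genuinely fails.) Since the paper itself simply cites Morita, your write-up is no less complete than the source, but it should be labelled ``normality proved, $m$-paracompactness quoted,'' not presented as a proof of the lemma.
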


\begin{lemma}{\rm \label{segundo} \cite[Th.3.4]{Mo}
Let $X$ be a topological space and $\{A_n\}_{n=0}^{\infty }$ a
countable closed covering such that if $C\subset X$ verifies that
$C\cap A_n$ is closed for all $n$ implies that $C$ is closed in
$X.$ If each $A_n$ is $m$-paracompact and normal, then $X$ is also
$m$-paracompact and normal.  ${}$\hfill $\square $}
\end{lemma}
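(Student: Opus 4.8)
The plan is to follow the classical Morita strategy in three moves: first reduce to an increasing countable closed cover, then establish normality of $X$ by an inductive extension of Urysohn functions, and finally obtain $m$-paracompactness by building a locally finite open refinement one layer at a time.

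First I would reduce to the increasing case. Setting $B_n=\bigcup_{k=0}^{n}A_k$, each $B_n$ carries the weak topology with respect to the finite (hence locally finite) closed cover $\{A_0,\dots,A_n\}$, so by Lemma \ref{primero} every $B_n$ is again $m$-paracompact and normal; moreover $\{B_n\}$ is an increasing countable closed cover of $X$, and if $C\cap B_n$ is closed for every $n$ then each $C\cap A_n$ is closed, whence $C$ is closed. Thus I may assume $A_0\subset A_1\subset\cdots$. The one fact about the weak topology that I will use repeatedly is that a map $g\colon X\to\mathbb{R}$ is continuous as soon as every restriction $g|_{A_n}$ is continuous, since then $g^{-1}(F)\cap A_n=(g|_{A_n})^{-1}(F)$ is closed in $A_n$, hence in $X$, and the weak topology forces $g^{-1}(F)$ to be closed.

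For normality, given disjoint closed sets $E,F\subset X$, I would build a Urysohn function inductively over the layers. Normality of $A_0$ yields $g_0\colon A_0\to[0,1]$ with $g_0\equiv 0$ on $E\cap A_0$ and $g_0\equiv 1$ on $F\cap A_0$. Assuming $g_{n-1}$ is built on $A_{n-1}$, I define a function on the closed subset $A_{n-1}\cup(E\cap A_n)\cup(F\cap A_n)$ of $A_n$ by $g_{n-1}$ on $A_{n-1}$, $0$ on $E\cap A_n$ and $1$ on $F\cap A_n$; the three pieces agree on their overlaps (for instance on $E\cap A_{n-1}$, where $g_{n-1}=0$), so this is continuous by the pasting lemma, and the Tietze extension theorem for the normal space $A_n$ produces $g_n\colon A_n\to[0,1]$ extending it. The $g_n$ are compatible, hence glue to $g\colon X\to[0,1]$, which is continuous by the weak-topology criterion above and separates $E$ from $F$. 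Thus $X$ is normal.

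Finally, for $m$-paracompactness, given an open cover $\mathcal{U}=\{U_\alpha\}_{\alpha\in\Lambda}$ with $|\Lambda|\le m$, I would construct by induction on $n$ a family $\mathcal{V}_n$ of open subsets of $X$, each contained in some $U_\alpha$, locally finite in $X$, such that the cumulative family $\bigcup_{k\le n}\mathcal{V}_k$ covers $A_n$. At stage $n$ one refines $\mathcal{U}$ over the layer $A_n$ using the $m$-paracompactness and normality of $A_n$, and then uses normality of the ambient $X$ (just proved) to thicken the resulting relatively open, relatively locally finite family to genuinely open subsets of $X$ while retaining the containment in members of $\mathcal{U}$; the desired refinement is the amalgam $\bigcup_n\mathcal{V}_n$, suitably re-indexed. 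The hard part is precisely this amalgamation, since local finiteness on each layer does not by itself give local finiteness on $X$: members arising from later layers could accumulate at a point of some fixed $A_N$. The device that resolves this is to interpolate, using normality of $X$, an increasing family of open sets $W_n\supset A_n$ and to force the new members of $\mathcal{V}_n$ to avoid $\overline{W_{n-2}}$ (which is legitimate because $\overline{W_{n-2}}$ can be arranged to lie inside the part already covered by $\bigcup_{k< n}\mathcal{V}_k$). Then any $x\in A_N$ has the open neighbourhood $W_N$, which misses every new member of $\mathcal{V}_n$ for $n\ge N+2$, so only the finitely many families $\mathcal{V}_0,\dots,\mathcal{V}_{N+1}$ remain, and a further shrinking of $W_N$ meets only finitely many sets in all. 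Closing up this bookkeeping simultaneously — open in $X$, subordinate to $\mathcal{U}$, covering, and globally locally finite — is where essentially all the work lies, and it yields that $X$ is both $m$-paracompact and normal.
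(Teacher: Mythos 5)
The paper offers no proof of this lemma at all --- it is quoted directly from Morita \cite[Th.~3.4]{Mo} and marked as external --- so the comparison can only be with the standard argument. Your reduction to an increasing cover (passing to $B_n=\bigcup_{k\le n}A_k$ and invoking Lemma \ref{primero} for the finite closed cover $\{A_0,\dots,A_n\}$) is correct, and your normality argument --- inductive Urysohn--Tietze extension over the layers, glued into a continuous function on $X$ by the weak-topology criterion --- is complete and sound.

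The $m$-paracompactness half, however, has a genuine gap exactly at the step you yourself defer. The sentence ``uses normality of the ambient $X$ to thicken the resulting relatively open, relatively locally finite family to genuinely open subsets of $X$ while retaining the containment in members of $\mathcal{U}$'' is not a valid inference: what is needed there is an expansion of a locally finite family of sets lying in the closed layer $A_n$ (locally finite in $X$, since $A_n$ is closed) to a locally finite family of \emph{open} subsets of $X$, each still inside its prescribed $U_\alpha$. Such expansions are a collectionwise-normality/expandability phenomenon and are \emph{not} guaranteed by normality alone --- in a normal, non-collectionwise-normal space even a discrete family of closed sets may admit no locally finite open expansion. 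Since this thickening is the engine of the whole second half, the argument as written is a plan rather than a proof. The $W_n$-interpolation device for controlling accumulation across layers is the right idea and can be made rigorous (with an index offset: the members of $\mathcal{V}_n$ must avoid $\overline{W_{n-3}}$, say, while covering the closed set $A_n\setminus W_{n-2}$, since open sets avoiding $\overline{W_{n-2}}$ cannot reach the frontier of $W_{n-2}$), but the passage from refinements of the trace covers on the $A_n$ to a single locally finite open refinement of $X$ requires a different mechanism --- Morita's own route goes through the characterization of $m$-paracompact normal spaces by subordinate locally finite partitions of unity for covers of cardinality $\le m$, extending and truncating functions rather than thickening sets. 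You would need to supply that mechanism to close the proof.
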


Now recall that given any map $f:C\rightarrow Y,$ where $C$ is a
closed subespace of a space $X,$ the \emph{adjunction space}
$X\cup _fY$ is the pushout
$$\xymatrix{{C} \ar@{^{(}->}[d]  \ar[r]^f & {Y} \ar@{^{(}->}[d] \\
{X} \ar[r] &  {X\cup _fY}  }$$

\begin{lemma}{\rm \label{tercero} \cite[Cor.1]{M}
Consider $X,Y$ $m$-paracompact and normal spaces, $C$ a closed
subespace of $X$ and $f:C\rightarrow Y$ any map. Then the
adjunction space $X\cup _fY$ is $m$-paracompact and normal.}
\hfill $\square $
\end{lemma}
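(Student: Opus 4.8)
The plan is to treat the two conclusions separately, since normality is fairly elementary while $m$-paracompactness is where the real work lies. Throughout write $Z=X\cup _fY$ and let $q\colon X\sqcup Y\to Z$ be the quotient map. Recall that $q|_Y$ is a closed embedding, so I identify $Y$ with the closed subspace $\tilde Y=q(Y)\subset Z$, while $q$ restricts to an open embedding of $X\setminus C$ onto the open complement $Z\setminus \tilde Y$. A set $V\subseteq Z$ is open (resp. closed) precisely when $q^{-1}(V)\cap X$ and $q^{-1}(V)\cap Y$ are open (resp. closed); in particular every closed $F\subseteq Z$ is encoded by a closed $A\subseteq X$ and a closed $B\subseteq Y$ satisfying the compatibility relation $A\cap C=f^{-1}(B)$.

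For normality I would start from two disjoint closed sets $F_0,F_1\subseteq Z$, encoded by pairs $(A_0,B_0)$ and $(A_1,B_1)$ as above. Since $Y$ is normal and $B_0,B_1$ are disjoint and closed, Urysohn yields $g\colon Y\to [0,1]$ with $g(B_0)=0$ and $g(B_1)=1$. Pulling back, $g\circ f\colon C\to[0,1]$ vanishes on $A_0\cap C$ and equals $1$ on $A_1\cap C$, by the compatibility $A_i\cap C=f^{-1}(B_i)$. On the closed subset $A_0\cup A_1\cup C$ of $X$ the rule that is $0$ on $A_0$, $1$ on $A_1$ and $g\circ f$ on $C$ is well defined and continuous, since $A_0\cap A_1=\emptyset$ and $f(A_i\cap C)\subseteq B_i$ make the overlaps consistent; by Tietze, using normality of $X$, it extends to $\Phi\colon X\to[0,1]$. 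Because $\Phi|_C=g\circ f$, the maps $\Phi$ and $g$ agree under the identification and hence glue to $\Psi\colon Z\to[0,1]$, continuous by the universal property of the quotient. As $\Psi$ is $0$ on $F_0$ and $1$ on $F_1$, its sublevel and superlevel sets separate them, so $Z$ is normal.

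For $m$-paracompactness I would take an open cover $\mathcal U$ of $Z$ with $|\mathcal U|\le m$ and assemble a locally finite refinement in three moves: first, restrict $\mathcal U$ to the closed copy $\tilde Y\cong Y$ and, by $m$-paracompactness of $Y$, refine it to a locally finite family there; second, push this family off $\tilde Y$ into a suitable open collar $N$ of $\tilde Y$ in $Z$, using the normality just established to shrink and control it; third, on $Z\setminus \tilde Y\cong X\setminus C$ use $m$-paracompactness of $X$ restricted to that open subspace to refine the part of $\mathcal U$ away from $\tilde Y$, and finally patch the two refinements along $N$.

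The main obstacle is the patching across the seam: one must guarantee that the combined family is locally finite \emph{as a family in $Z$}, not merely in each piece. The tempting shortcut of writing $Z$ as the union of the two $m$-paracompact normal pieces $q(Y)$ and $q(X)$ and invoking Lemma~\ref{primero} fails in general, because $q(X)$ is closed in $Z$ only when $f(C)$ is closed in $Y$, which is not assumed; hence the weak-topology criterion does not apply directly. The collar construction of the second move must therefore be carried out by hand so that local finiteness survives the identification along $f$, and this is exactly the point where I would invoke the refinement argument of Michael \cite{M}, which performs this control uniformly for every infinite cardinal $m$ and so delivers $m$-paracompactness and normality together.
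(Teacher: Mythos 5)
Your normality argument is complete and correct: the encoding of closed sets of $Z=X\cup_f Y$ by compatible pairs $(A,B)$ with $A\cap C=f^{-1}(B)$, the Urysohn function on $Y$, and the Tietze extension of the glued function on the closed set $A_0\cup A_1\cup C$ all work as stated; your observation that Lemma~\ref{primero} cannot be applied to the cover $\{q(X),q(Y)\}$ because $q(X)$ is closed in $Z$ only when $f(C)$ is closed in $Y$ is also correct and worth making. Be aware, though, that the paper offers no proof of this lemma at all: it is quoted verbatim from \cite[Cor.~1]{M} (whose author is Mancuso, not Michael), so the only ``proof'' in the paper is the citation itself.

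The $m$-paracompactness half of your proposal has two genuine gaps. First, your third move proposes to ``use $m$-paracompactness of $X$ restricted to the open subspace $X\setminus C$''; but $m$-paracompactness does not pass to open subspaces --- the deleted Tychonoff plank is an open, non-normal (hence non-paracompact) subspace of the compact Tychonoff plank --- so this step is not available as stated. The standard remedy is to pull the cover of $Z$ back to all of $X$, refine there using $m$-paracompactness of $X$ itself, and only afterwards shrink away from $C$. Second, the step you yourself flag as the main obstacle --- extending a locally finite open family of the closed copy of $Y$ to a family of open sets of $Z$ that is locally finite in $Z$ and still refines $\mathcal U$ --- is precisely the content of Mancuso's argument, and resolving it by ``invoking the refinement argument of \cite{M}'' is circular, since \cite{M} is the source of the very statement being proved. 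As written, your text is a correct proof of normality together with a plan for $m$-paracompactness whose essential construction is missing; to make it self-contained you would need to carry out the extension across the seam explicitly, e.g.\ by producing for each member $V$ of the refinement on $Y$ an open $\hat V\subseteq Z$ with $\hat V\cap Y=V$, controlled by a shrinking of the family on $Y$ and by the normality of $X$, so that local finiteness is preserved at points of the image of $C$.
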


Using the above results one can prove the following proposition.

\begin{proposition}{\rm
If $X$ is an exterior CW-complex, then $X^{\infty }$ is a
paracompact space.}
\end{proposition}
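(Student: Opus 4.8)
The plan is to prove the stronger statement that $X^{\infty}$ is $m$-paracompact and normal for every infinite cardinal $m$, and then invoke the criterion (paracompact $\Leftrightarrow$ $m$-paracompact for all $m$) recalled above. The whole argument rests on the fact that $(-)^{\infty}\colon \E \to \mathbf{Top}^{\infty}$ is an equivalence of categories and hence preserves colimits; this lets us translate the exterior CW-structure of $X$ into a purely topological cell-attachment description of $X^{\infty}$, to which Lemmas \ref{primero}, \ref{segundo} and \ref{tercero} apply.

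First I would identify the image under $(-)^{\infty}$ of each building block. Since $(-)^{\infty}$ carries coproducts in $\E$ to coproducts in $\mathbf{Top}^{\infty}$, which are wedges at the point $\infty$, a disjoint union $\sqcup_\gamma \SN^{n-1}_\gamma$ becomes the wedge $\bigvee_\gamma (\SN^{n-1}_\gamma)^{\infty}$, and similarly for the discs. Applying $(-)^{\infty}$ to the attaching pushout then exhibits $(X^n)^{\infty}$ as the topological adjunction space obtained from $(X^{n-1})^{\infty}$ by gluing $\bigvee_\gamma (\DN^n_\gamma)^{\infty}$ along the closed subspace $\bigvee_\gamma (\SN^{n-1}_\gamma)^{\infty}$ via the induced attaching map. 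The only delicate point here is checking that the pushout computed in $\mathbf{Top}^{\infty}$ coincides, as a topological space, with the ordinary adjunction space; this is where the condition $f^{-1}(\infty)=\{\infty\}$ defining $\infty$-maps is used, together with an explicit comparison of the neighbourhoods of $\infty$ given by the pushout externology and by the quotient topology.

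Next I would run an induction on $n$ to show that each $(X^n)^{\infty}$ is $m$-paracompact and normal. The base case is $(X^{-1})^{\infty}=\{\infty\}$, a single point. For the wedges appearing in the inductive step I observe that $\bigvee_\gamma K_\gamma$ carries the weak topology with respect to the closed covering $\{K_\gamma\}$ (a set is open iff it meets each summand in an open set, which is exactly the neighbourhood description of $\infty$ coming from the coproduct externology); since each $(\SN^k)^{\infty}=(\SN^k)^{+}$ and $(\DN^k)^{\infty}=(\DN^k)^{+}$ is paracompact Hausdorff, hence $m$-paracompact and normal, Lemma \ref{primero} shows that $\bigvee_\gamma (\SN^{n-1}_\gamma)^{\infty}$ and $\bigvee_\gamma (\DN^n_\gamma)^{\infty}$ are $m$-paracompact and normal. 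With $(X^{n-1})^{\infty}$ $m$-paracompact and normal by the inductive hypothesis and $\bigvee_\gamma (\SN^{n-1}_\gamma)^{\infty}$ closed in $\bigvee_\gamma (\DN^n_\gamma)^{\infty}$, Lemma \ref{tercero} applied to the adjunction space yields that $(X^n)^{\infty}$ is $m$-paracompact and normal.

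Finally, since $X=\mathrm{colim}_n X^n$ and $(-)^{\infty}$ preserves this sequential colimit, $X^{\infty}=\mathrm{colim}_n (X^n)^{\infty}$ is built from the countable closed covering $\{(X^n)^{\infty}\}_{n\geq 0}$ along closed inclusions, so it carries the weak topology with respect to this covering. As each $(X^n)^{\infty}$ is $m$-paracompact and normal, Lemma \ref{segundo} gives that $X^{\infty}$ is $m$-paracompact and normal. Letting $m$ range over all infinite cardinals concludes that $X^{\infty}$ is paracompact. I expect the main obstacle to be the topological identification in the second paragraph, namely verifying that the abstract $\mathbf{Top}^{\infty}$-pushout really is the adjunction space and that the wedge-at-$\infty$ carries the weak topology, since everything afterwards is a clean application of the three cited lemmas.
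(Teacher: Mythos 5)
Your proposal is correct and follows exactly the paper's argument: use the fact that the equivalence $(-)^{\infty}$ preserves colimits to present $X^{\infty}$ as a colimit of adjunction spaces built from the paracompact normal blocks $(\SN^k)^+$ and $(\DN^k)^+$, then apply Lemmas \ref{primero}, \ref{segundo} and \ref{tercero} with an induction on skeleta. The paper leaves the details to the reader, and the ones you supply (the wedge carrying the weak topology, and the identification of the $\mathbf{Top}^{\infty}$-pushout with the ordinary adjunction space) are precisely the right ones to check.
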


\begin{proof}
Since $(-)^{\infty }:\mathbf{E}\rightarrow \mathbf{Top^{\infty }}$
is an equivalence of categories, in particular it preserves all
small limits and colimits. This fact implies that:
\begin{enumerate}
\item[(i)] $X^{\infty }=\mbox{colim}(X^n)^{\infty };$ that is
$X^{\infty }$ is the union of all  $(X^n)^{\infty }$ with the
hypothesis of lemma \ref{segundo}.

\item[(ii)] $(X^{n-1})^{\infty }$ and $(X^n)^{\infty }$ are related
through the topological pushout
$$\xymatrix@C=1.6cm@R=1cm{
\bigvee_{\gamma \in \Gamma} (\SN^{n-1}_\gamma)^+ \ar@{^{(}->}[d]
\ar[r] & (X^{n-1})^{\infty } \ar@{^{(}->}[d] \\
\bigvee_{\gamma \in \Gamma} (\DN^{n}_\gamma )^+ \ar[r] &
(X^n)^{\infty } }$$ That is, every $(X^n)^{\infty }$ is an
adjunction space.
\end{enumerate}
The combination of lemmas \ref{primero}, \ref{segundo} and
\ref{tercero} together with an easy induction argument permit us
to prove the result. The details are left to the reader.
\end{proof}

As corollaries we obtain the following results.

\begin{proposition}{\rm
If $\mathcal{U}$ is an open covering of an exterior CW-complex $X$
where at least one of its members is an exterior open subset, then
$\mathcal{U}$ is an e-numerable covering of $X.$ \hfill $\square
$}
\end{proposition}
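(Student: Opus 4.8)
The plan is to reduce the claim to the fact recalled in the text that every open covering of a paracompact space is numerable, applied to $X^\infty$, which is paracompact by the preceding proposition. The obstacle to a direct application is that the covering $\mathcal{U}^\infty=\{U_i\cup\{\infty\}\}_{i\in I}$ appearing in Definition \ref{e-numerable} is in general \emph{not} open in $X^\infty$: by the description $\tau^\infty=\tau_X\cup\{E\cup\{\infty\}:E\in\varepsilon_X\}$, a set $U_i\cup\{\infty\}$ is open in $X^\infty$ exactly when $U_i$ is an exterior open subset, and the hypothesis only guarantees this for one index. This is the crux of the argument.

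To circumvent it I would fix the distinguished member $U_{i_0}\in\varepsilon_X$ and replace $\mathcal{U}^\infty$ by the family $\mathcal{V}=\{V_i\}_{i\in I}$ defined by $V_{i_0}=U_{i_0}\cup\{\infty\}$ and $V_i=U_i$ for $i\neq i_0$. Each $V_i$ is then open in $X^\infty$: the sets $U_i\in\tau_X\subseteq\tau^\infty$ are open, while $U_{i_0}\cup\{\infty\}$ is open precisely because $U_{i_0}\in\varepsilon_X$. Since $\bigcup_i U_i=X$ and $\infty\in V_{i_0}$, the family $\mathcal{V}$ covers $X^\infty$, so it is a genuine open covering.

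Next I would invoke the preceding proposition: $X^\infty$ is paracompact, and the same inductive argument through Lemmas \ref{primero}, \ref{segundo} and \ref{tercero} also yields that it is normal. Hence the open covering $\mathcal{V}$ has a locally finite open refinement, and normality furnishes a partition of unity subordinate to it; thus $\mathcal{V}$ is numerable. Finally, since $V_i\subseteq U_i\cup\{\infty\}$ for every $i$, the covering $\mathcal{V}$ refines $\mathcal{U}^\infty$, so any partition of unity subordinate to $\mathcal{V}$ is automatically subordinate to $\mathcal{U}^\infty$. Therefore $\mathcal{U}^\infty$ is numerable, which by Definition \ref{e-numerable} is exactly the statement that $\mathcal{U}$ is an e-numerable covering of $X$. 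The only point requiring care beyond these formalities is confirming that all but one member may be shrunk back to $U_i$ without losing the covering property, which is guaranteed by the presence of the exterior open member $U_{i_0}$ absorbing the point $\infty$.
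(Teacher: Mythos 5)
Your proof is correct and follows exactly the route the paper intends: the proposition is stated as an immediate corollary of the paracompactness (and, via Lemmas \ref{primero}--\ref{tercero}, normality) of $X^{\infty}$, with the exterior-open member serving precisely to absorb the point $\infty$ into a genuine open refinement of $\mathcal{U}^{\infty}$. Your observation that $\mathcal{U}^{\infty}$ itself need not be open and must be handled through the refinement $\mathcal{V}$ is exactly the point the hypothesis is designed to address.
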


\begin{proposition}{\rm
Let $X$ be a finite dimensional locally finite CW-complex (for
instance, any open differentiable $n$-manifold or any open PL
$n$-manifold). Then, its Alexandroff compactification $X^+$ is
paracompact. As a consequence, if $\{X_{\alpha }\}_{\alpha \in A}$
is a covering of $X$ such that the family of their interiors
$\{\mbox{int}(X_{\alpha })\}_{\alpha \in A}$ also covers $X$ and
there exists $\alpha \in A$ such that the complement $X\setminus
\mbox{int}(X_{\alpha })$ is compact, then $\{X_{\alpha }\}_{\alpha
\in A}$ is a p-numerable covering of $X.$ \hfill $\square $}
\end{proposition}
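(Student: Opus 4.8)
The plan is to prove the two assertions of the proposition in sequence, using the preceding results as black boxes. The first assertion is that the Alexandroff compactification $X^+$ of a finite dimensional locally finite CW-complex $X$ is paracompact. Here I would invoke the preceding machinery: by Lemma \ref{push} and the subsequent remark, $X_{cc}$ carries the structure of an exterior CW-complex, so its one-point compactification $X^+ = (X_{cc})^{\infty}$ is paracompact by the previous proposition (the one asserting $X^{\infty}$ is paracompact for any exterior CW-complex $X$). Thus the first claim follows immediately from the identification $X^+ = (X_{cc})^{\infty}$ established after the equivalence ${\bf E} \simeq {\bf Top}^{\infty}$, together with the fact that $X_{cc}$ is an exterior CW-complex.

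For the second assertion, I would work with the cocompact externology and verify the definition of p-numerable covering directly. Set $Y = X_{cc}$, so that $Y^{\infty} = X^+$ is paracompact by the first part. I must show that $\mathcal{U}^{\infty} = \{X_{\alpha} \cup \{\infty\}\}_{\alpha \in A}$ is a numerable covering of $X^+$. Since $X^+$ is paracompact (hence normal), every open covering of $X^+$ is numerable. So the key is to produce from $\{X_{\alpha}\}$ an \emph{open} covering of $X^+$ that refines, or equals after passing to $\mathcal{U}^{\infty}$, the given family. The natural candidate is $\{\mathrm{int}(X_{\alpha})\}_{\alpha \in A}$ together with the neighborhoods of $\infty$. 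The hypothesis gives an $\alpha_0$ with $X \setminus \mathrm{int}(X_{\alpha_0})$ compact; this is precisely what makes $\mathrm{int}(X_{\alpha_0}) \cup \{\infty\}$ an \emph{open} neighborhood of $\infty$ in $X^+$, since its complement $X^+ \setminus (\mathrm{int}(X_{\alpha_0}) \cup \{\infty\}) = X \setminus \mathrm{int}(X_{\alpha_0})$ is compact and hence closed.

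With that observation the argument assembles as follows. The family $\{\mathrm{int}(X_{\alpha}) : \alpha \neq \alpha_0\} \cup \{\mathrm{int}(X_{\alpha_0}) \cup \{\infty\}\}$ is an open covering of $X^+$: its sets are open in $X^+$ (the first kind because $\mathrm{int}(X_{\alpha})$ is open in $X$ and $X$ is open in $X^+$, the last because of the compactness hypothesis), and they cover $X^+$ because $\{\mathrm{int}(X_{\alpha})\}$ covers $X$ and the last set contains $\infty$. Since $X^+$ is paracompact and normal, this open covering is numerable, i.e.\ admits a subordinate partition of unity. This partition of unity refines $\mathcal{U}^{\infty}$, because each member of the open covering is contained in the corresponding $X_{\alpha} \cup \{\infty\}$ (indeed $\mathrm{int}(X_{\alpha}) \subseteq X_{\alpha}$). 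Hence $\mathcal{U}^{\infty}$ is numerable, which by Definition \ref{e-numerable} is exactly the statement that $\mathcal{U} = \{X_{\alpha}\}$ is a p-numerable covering of $X$.

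The step I expect to require the most care is the verification that $\mathrm{int}(X_{\alpha_0}) \cup \{\infty\}$ is genuinely open in $X^+$ and that the resulting open family is numerable in the precise sense of admitting a subordinate partition of unity refining $\mathcal{U}^{\infty}$. The compactness hypothesis on $X \setminus \mathrm{int}(X_{\alpha_0})$ is doing real work here, as it is the only thing preventing the covering from failing to control the behavior at $\infty$; without some member's complement being compact, no member of $\mathcal{U}^{\infty}$ would be a neighborhood of the base point, and the externology condition implicit in the definition of p-numerable covering would break down.
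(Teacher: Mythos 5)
Your proof is correct and follows exactly the route the paper intends: the paper states this proposition as an immediate corollary (with no written proof) of the fact that $X^{\infty}$ is paracompact for an exterior CW-complex, and your argument --- identifying $X^+=(X_{cc})^{\infty}$, passing to the open refinement by interiors with $\mathrm{int}(X_{\alpha_0})\cup\{\infty\}$ open at $\infty$ thanks to the compact-complement hypothesis, and invoking paracompactness plus normality to get a subordinate partition of unity refining $\mathcal{U}^{\infty}$ --- is precisely the omitted verification. The only cosmetic caveat is that ``paracompact hence normal'' should be justified via the Hausdorffness of $X^+$ (or via the normality delivered directly by the Morita-type lemmas), but this changes nothing.
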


\section{An application: the fibrewise proper homotopy equivalences.}

In this section we will give an application. Namely, we will prove
that the fibrewise proper homotopy equivalences satisfy a local to
global type theorem. For this aim we first translate the
corresponding notions to the framework of exterior spaces, or
$\infty $-spaces. First of all, we note the following important
fact, connecting the (fibrewise) proper homotopy equivalences and
their exterior counterpart. Its proof is routine and left to the
reader.

\begin{proposition}{\rm
Let $B$ is a fixed topological space and $f:X\rightarrow Y$ a
proper map over $B,$ that is a commutative diagram in $\mathbf{P}$
$$\xymatrix{
{X} \ar[rr]^f \ar[dr] & & {Y} \ar[dl] \\
 & {B}  & }$$
Then $f$ is a proper homotopy equivalence over $B$ if and only if
$f_{cc}$ is an exterior homotopy equivalence over $B_{cc}.$ \hfill
$\square $}
\end{proposition}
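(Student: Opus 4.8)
The plan is to exploit that the cocompact functor $(-)_{cc}\colon\mathbf{P}\hookrightarrow\mathbf{E}$ is a full embedding which moreover carries the proper cylinder to the exterior one. Two facts do all the work. First, $(-)_{cc}$ is full and faithful, so for proper spaces an exterior map $X_{cc}\to Y_{cc}$ is exactly a proper map $X\to Y$. Second, by Remark \ref{restr-prop-cil} one has $X_{cc}\timess I=(X\times I)_{cc}$, so an exterior homotopy between cocompact spaces is precisely a proper homotopy. Since the structure maps $p,q$ to $B$, the cylinder projection $\rho$, and the inclusions $\imath_0,\imath_1$ are all preserved by $(-)_{cc}$, the notions ``over $B$'' and ``over $B_{cc}$'' correspond under the embedding, and the whole statement becomes a formal consequence of these identifications.

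For the forward implication I would start with a proper homotopy inverse $g\colon Y\to X$ over $B$ together with proper homotopies $H\colon X\times I\to X$ and $K\colon Y\times I\to Y$ over $B$ witnessing $gf\simeq_B \mathrm{id}_X$ and $fg\simeq_B\mathrm{id}_Y$. Applying $(-)_{cc}$ yields an exterior map $g_{cc}\colon Y_{cc}\to X_{cc}$ with $p_{cc}g_{cc}=(pg)_{cc}=q_{cc}$, so $g_{cc}$ is over $B_{cc}$. Using $X_{cc}\timess I=(X\times I)_{cc}$, the map $H_{cc}$ becomes an exterior homotopy $X_{cc}\timess I\to X_{cc}$, and functoriality transports its endpoint conditions $H_{cc}\imath_0=g_{cc}f_{cc}$, $H_{cc}\imath_1=\mathrm{id}$ and its over-$B_{cc}$ condition $p_{cc}H_{cc}=p_{cc}\rho$ directly from those of $H$; likewise for $K_{cc}$. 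Thus $g_{cc}$ is an exterior homotopy inverse of $f_{cc}$ over $B_{cc}$.

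For the converse I would reverse this using fullness. Given an exterior homotopy inverse $h\colon Y_{cc}\to X_{cc}$ over $B_{cc}$, fullness produces a unique proper map $g\colon Y\to X$ with $g_{cc}=h$, and since $(pg)_{cc}=p_{cc}h=q_{cc}$, faithfulness forces $pg=q$, so $g$ lies over $B$. Each exterior homotopy witnessing $hf_{cc}\simeq_{B_{cc}}\mathrm{id}$ is, via $X_{cc}\timess I=(X\times I)_{cc}$, an exterior map $(X\times I)_{cc}\to X_{cc}$; fullness again lifts it to a unique proper homotopy $X\times I\to X$, and faithfulness transfers the endpoint and over-$B$ conditions. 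The analogous lift handles $f_{cc}h\simeq_{B_{cc}}\mathrm{id}$, yielding proper homotopies $gf\simeq_B\mathrm{id}_X$ and $fg\simeq_B\mathrm{id}_Y$, so $f$ is a proper homotopy equivalence over $B$.

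The only point requiring care is the compatibility of the ``over $B$'' data with the embedding: one must check that the cylinder projection $\rho$ and the inclusions $\imath_0,\imath_1$ of $\mathbf{E}$ restrict under $(-)_{cc}$ to the corresponding proper cylinder structure, which is exactly what the restriction $-\timess I=-\times I\colon\mathbf{P}\to\mathbf{P}$ guarantees. Once this is in hand, every condition to be verified is the image (or preimage) of a proper condition under a full and faithful functor, so no genuine topological difficulty remains; this is precisely why the proof is routine.
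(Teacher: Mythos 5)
Your argument is correct and is precisely the ``routine'' verification that the paper leaves to the reader: the statement is a formal consequence of $(-)_{cc}$ being a full and faithful embedding together with the identification $X_{cc}\timess I=(X\times I)_{cc}$ from Remark \ref{restr-prop-cil}, which is exactly how you proceed. Nothing further is needed.
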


We want to connect this result with the category of $\infty
$-spaces.

\subsection{Fibrewise homotopy equivalences in $\mathbf{Top}^{\infty }$}

Let $(X,x_0)$ be an $\infty $-space. Its \emph{pointed cylinder}
$I_*(X)$ is just the quotient space $I_*(X)=(X\times
I)/(\{x_0\}\times I)$ coming from the topological pushout
$$\xymatrix{
{\{x_0\}\times I} \ar[d] \ar[r] & {*} \ar[d] \\
{X\times I} \ar[r]_{\pi } & {I_*(X)} }$$ Observe that $I_{*}(X)$
is again an $\infty $-space since $\pi ^{-1}(*)=\{x_0\}\times I$
is closed in $X\times I$ and therefore $*$ is closed in $I_*(X).$
This construction gives rise to a functor
$$I_*:\mathbf{Top}^{\infty }\rightarrow \mathbf{Top}^{\infty }$$
The pointed cylinder is also equipped with natural transformations
$i_0,i_1:X\rightarrow I_*(X)$ and $p:I_*(X)\rightarrow X$ in
$\mathbf{Top}^{\infty }.$ This way a notion of homotopy comes
naturally: Given $f,g:X\rightarrow Y$ $\infty$-maps we say that
$f$ is \emph{$\infty $-homotopic} to $g$ ($f\simeq _{\infty }g$)
if there exists $F:I_*(X)\rightarrow Y$ an $\infty $-map such that
$Fi_0=f$ and $Fi_1=g.$ However we may also use the non-pointed
cylinder; it is straightforward to check that $f\simeq _{\infty
}g$ if and only if there exists a continuous map $F:X\times
I\rightarrow Y$ such that $F^{-1}(\{y_0\})=\{x_0\}\times I$ (in
particular is a classical pointed homotopy) and $F(x,0)=f(x),$
$F(x,1)=g(x),$ for all $x\in X.$ The notion of \emph{$\infty
$-homotopy equivalence} (resp. \emph{$\infty $-homotopy
equivalence over $B$}) comes naturally.


The next result explores the connection between the cylinder
construction in ${\bf Top}^\infty$ and $\mathbf{E}.$

\begin{lemma}{\rm
Let $X$ be any exterior space. Then there exists a natural
isomorphism in ${\bf Top}^\infty $ $$(X\bar{\times }I)^{\infty
}\cong  I_*(X^{\infty })$$}
\end{lemma}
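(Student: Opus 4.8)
The plan is to construct an explicit homeomorphism between the two pointed spaces and verify it is an $\infty$-map in both directions, checking naturality at the end. Recall that $(X\timess I)^{\infty}$ is obtained from the exterior space $X\timess I$ by adjoining a single point $\infty$ whose neighborhoods are the sets $E\cup\{\infty\}$ with $E$ an e-open subset of $X\timess I$; on the other hand $I_*(X^{\infty})$ is the quotient $(X^{\infty}\times I)/(\{\infty\}\times I)$, whose basepoint $*$ is the image of the collapsed segment. As a set there is an obvious candidate bijection: a point of $(X\timess I)^{\infty}$ is either a pair $(x,t)\in X\times I$ or the point $\infty$, and I would send $(x,t)\mapsto \pi(x,t)$ and $\infty \mapsto *$, where $\pi:X^{\infty}\times I\to I_*(X^{\infty})$ is the quotient map restricted to the copy of $X\times I$ plus the collapsed fibre over $\infty$.

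First I would set up this set-theoretic bijection carefully and confirm it respects basepoints, sending $\infty$ to $*$ and $(X\times I)$ bijectively onto $I_*(X^{\infty})\setminus\{*\}$. Second, I would identify the topology and externology on $X\timess I$: by the definition of the functor $-\timess Y$ with $Y=I$ compact (Remark \ref{restr-prop-cil}), an open set $E$ of $X\times I$ is e-open in $X\timess I$ precisely when there exists $G\in\ex_X$ with $G\times I\subset E$. This is the key structural fact, because it matches exactly the description of neighborhoods of $*$ in $I_*(X^{\infty})$: a neighborhood of $*$ comes from a $\pi$-saturated open set of $X^{\infty}\times I$ containing $\{\infty\}\times I$, and tube-lemma compactness of $I$ forces such a set to contain $(G^{\infty})\times I$ for some e-open $G$ of $X$. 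I would use this to show that the neighborhood bases of $\infty$ and of $*$ correspond under the bijection, while away from the basepoint the correspondence is just the identity on $X\times I$ with the product topology.

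Third, I would verify continuity in both directions. Away from the special point this is routine since both spaces restrict to the ordinary product topology on $X\times I$. At the special point, continuity of the forward map and of its inverse both reduce to the neighborhood-base correspondence established above, using the tube lemma for the compact factor $I$ to pass between ``$E$ contains $G\times I$'' and ``the saturated open set contains a tube.'' Finally I would check naturality: given an exterior map $h:X\to X'$, both $(h\timess I)^{\infty}$ and $I_*(h^{\infty})$ are induced by $(x,t)\mapsto(h(x),t)$ on the underlying sets, so the square commutes on points and hence as a diagram in ${\bf Top}^{\infty}$.

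The main obstacle I anticipate is the verification at the basepoint, specifically proving that every neighborhood of $*$ in $I_*(X^{\infty})$ pulls back to a neighborhood of $\infty$ in $(X\timess I)^{\infty}$ and conversely. This is exactly the point where the quotient topology (defined by saturated opens) must be reconciled with the $\timess$-externology (defined by the ``$\exists G\times I\subset E$'' condition), and it is the tube lemma together with Remark \ref{restr-prop-cil} that bridges the two; away from the basepoint everything is formal, so the entire content of the lemma is concentrated in this single compactness argument.
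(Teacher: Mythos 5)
Your proposal is correct and follows essentially the same route as the paper: the identical basepoint-preserving bijection $(x,t)\mapsto\pi(x,t)$, $\infty\mapsto *$, with all the content concentrated in the continuity check at the special point. The only (cosmetic) difference is that the paper obtains continuity of the inverse by factoring the map $h:X^{\infty}\times I\to (X\bar{\times}I)^{\infty}$ through the quotient map $\pi$, whereas you match neighborhood bases of $\infty$ and $*$ directly via the tube lemma and Remark \ref{restr-prop-cil}; these are the same compactness observation in different packaging.
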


\begin{proof}
Recall that $I_*(X^{\infty })=(X^{\infty }\times
I)/(\{\infty\}\times I)$ where $\pi :X^{\infty }\times
I\rightarrow I_*(X^{\infty })$ denotes the canonical projection.
We define the $\infty $-map $\varphi :(X\bar{\times }I)^{\infty
}\rightarrow I_*(X^{\infty })$ by $\varphi (x,t)=\pi (x,t),$ for
$(x,t)\in X\times I,$ and $\varphi (\infty )=*.$ Then we have that
$\varphi $ is an isomorphism. Indeed, the map $h:X^{\infty }\times
I\rightarrow (X\bar{\times }I)^{\infty }$ \noindent given by
$h(x,t)=(x,t)$ and $h(\infty ,t)=\infty ,$ satisfies   $h(\{\infty
\}\times I)=\{\infty \}$ (moreover, $h^{-1}(\{\infty \})=\{\infty
\}\times I$) and it induces an $\infty $-map $\psi :I_*(X^{\infty
})\rightarrow (X\bar{\times }I)^{\infty }$ such that $\psi \pi
=h.$ One can straightforwardly check that $\psi \varphi =id$ and
$\varphi \psi =id.$
\end{proof}

\begin{remark}
In the proper case, the above result can be read as $$(X\times
I)^+\cong I_*(X^+)$$ That is, given any space $X,$ the Alexandroff
compactification of the cylinder $X\times I$ is, up to isomorphism
in ${\bf Top}^\infty $, the pointed cylinder of the Alexandroff
compactification of $X.$
\end{remark}

As an immediate result, given $f,g:X\rightarrow Y$ exterior maps,
we have that $f\simeq _eg$ if and only if $f^{\infty }\simeq
_{\infty }g^{\infty }.$ Moreover,

\begin{proposition}\label{traducc}{\rm
Let $B$ be a fixed exterior space and $f:X\rightarrow Y$ an
exterior map over $B.$ Then $f$ is an exterior homotopy
equivalence over $B$ if and only if $f^{\infty }$ is an $\infty
$-homotopy equivalence over $B^{\infty }.$ \hfill $\square $}
\end{proposition}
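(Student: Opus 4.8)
The plan is to exploit the fact that $(-)^\infty\colon\mathbf{E}\to\mathbf{Top}^\infty$ is an equivalence of categories, so it is fully faithful and carries the cylinder functor $-\bar{\times}I$ to the pointed cylinder $I_*$ by the natural isomorphism $(X\bar{\times}I)^\infty\cong I_*(X^\infty)$ established in the preceding lemma. Since $f$ is an exterior map over $B$, applying $(-)^\infty$ yields the commutative triangle consisting of $f^\infty\colon X^\infty\to Y^\infty$, $p^\infty\colon X^\infty\to B^\infty$ and $q^\infty\colon Y^\infty\to B^\infty$ in $\mathbf{Top}^\infty$, so $f^\infty$ is an $\infty$-map over $B^\infty$. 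The whole argument then reduces to checking that the equivalence of categories translates ``exterior homotopy equivalence over $B$'' into ``$\infty$-homotopy equivalence over $B^\infty$'' in both directions.

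First I would spell out the correspondence of homotopies over the base. A fibrewise exterior homotopy is an exterior map $H\colon X\bar{\times}I\to Y$ satisfying $qH=p\rho$ together with the endpoint conditions. Applying $(-)^\infty$ and inserting the natural isomorphism $(X\bar{\times}I)^\infty\cong I_*(X^\infty)$ produces an $\infty$-map $H^\infty\colon I_*(X^\infty)\to Y^\infty$ whose composite with $q^\infty$ recovers $p^\infty$ precomposed with the projection, i.e.\ an $\infty$-homotopy over $B^\infty$; conversely, the quasi-inverse functor $\overline{(-)}$ together with the same natural isomorphism sends an $\infty$-homotopy over $B^\infty$ back to an exterior homotopy over $B$. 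This is exactly the unbased statement already recorded just before the proposition, namely that $f\simeq_e g$ iff $f^\infty\simeq_\infty g^\infty$, now carried out \emph{relative to the projection onto the base}; the only new ingredient is tracking the compatibility with $q$ and $q^\infty$, which is automatic because $(-)^\infty$ is a functor.

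Next I would assemble the equivalence. Suppose $f$ is an exterior homotopy equivalence over $B$, witnessed by an exterior map $g\colon Y\to X$ over $B$ with $gf\simeq_B \mathrm{id}_X$ and $fg\simeq_B \mathrm{id}_Y$. Applying $(-)^\infty$ gives $g^\infty\colon Y^\infty\to X^\infty$ over $B^\infty$, and by the homotopy translation of the previous paragraph $g^\infty f^\infty\simeq_{B^\infty}\mathrm{id}_{X^\infty}$ and $f^\infty g^\infty\simeq_{B^\infty}\mathrm{id}_{Y^\infty}$, so $f^\infty$ is an $\infty$-homotopy equivalence over $B^\infty$. For the converse I would use that $(-)^\infty$ is an equivalence: given a fibrewise $\infty$-homotopy inverse $h\colon Y^\infty\to X^\infty$ over $B^\infty$, full faithfulness produces a unique exterior map $\bar h\colon Y\to X$ with $\bar h^\infty=h$, and this $\bar h$ is automatically over $B$ because $q^\infty\bar h^\infty=p^\infty$ implies $q\bar h=p$ after applying the quasi-inverse; transporting the two fibrewise $\infty$-homotopies back through $\overline{(-)}$ shows $\bar h$ is a fibrewise exterior homotopy inverse of $f$.

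I do not expect any serious obstacle here, since the heavy lifting was done in the cylinder lemma; the proposition is essentially a formal consequence of ``equivalence of categories carrying one cylinder to the other'' applied in the fibrewise setting. The one point requiring mild care is the compatibility of the natural isomorphism $(X\bar{\times}I)^\infty\cong I_*(X^\infty)$ with the structure maps $\imath_0,\imath_1,\rho$ on the exterior side and $i_0,i_1,p$ on the $\infty$-side, so that endpoint and projection conditions of a homotopy are genuinely preserved under the translation; but this is exactly the naturality already asserted in the lemma, so verifying it amounts to chasing the endpoint maps through $\varphi$ and $\psi$. This is why the proposition is stated with its proof left to the reader.
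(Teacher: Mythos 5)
Your argument is correct and is exactly the one the paper has in mind: the proposition is stated with its proof omitted as an immediate consequence of the cylinder lemma $(X\bar{\times}I)^{\infty}\cong I_{*}(X^{\infty})$ and the fact that $(-)^{\infty}$ is an equivalence of categories, which is precisely the machinery you invoke. Spelling out the fibrewise bookkeeping as you do fills in the details the author left to the reader, with no divergence in method.
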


\bigskip
Now we establish our result. But first we need the following
notions and results related to the classical topological case.
Their proofs can be found in \cite{D}, \cite{D-K-P} or \cite{T}.

Recall that given $B$ a topological space, a \emph{halo} around
$A\subset B$ is a subset $V\subset B$ such that there a continuous
map $\tau :B\rightarrow [0,1]$ with $A\subset \tau ^{-1}(1)$ and
$B\setminus V\subset \tau ^{-1}(0).$ A continuous map
$p:E\rightarrow B$ is said to have the \emph{Section Extension
Property} (SEP) if for every $A\subset B$ and every section $s$
over $A$ which admits an extension as a section to a halo $V$
around $A$, there exists a section $S:B\rightarrow E$ over $B$
with $S|_A=s.$ (In particular, if $p$ has the SEP then $p$ always
has a section by taking $A=\emptyset =V.$)

Consider the category $\mathbf{Top}_B$ of topological spaces over
a fixed space $B$ and maintain the same notation and terminology
as the ones given for the exterior case. That is, we shall deal
with spaces and maps over $B$ (or fibrewise spaces and maps) and
fibrewise homotopies, also denoted as $\simeq _B.$ Then it is said
that $p:E\rightarrow B$ \emph{is dominated} by $p':E'\rightarrow
B$ if there exist fibrewise maps $f:E\rightarrow E'$ and
$g:E'\rightarrow E$ such that $gf\simeq _B id_{E}.$

By a \emph{shrinkable} space over $B$ we mean any space over $B$
which has the same fibrewise homotopy type as the identity
$id_B:B\rightarrow B.$

\begin{proposition}\label{one}\cite[Prop.2.3]{D} {\rm
Suppose that $p:E\rightarrow B$ is dominated by $p':E'\rightarrow
B.$ If $p'$ has the SEP, then so does $p.$ In particular, every
shrinkable space has the SEP. \hfill $\square $}
\end{proposition}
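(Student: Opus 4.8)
The plan is to transport the extension problem from $E$ to $E'$, solve it there using the SEP of $p'$, transport the solution back to $E$ via $g$, and finally repair the resulting section on $A$ by means of the domination homotopy. Fix fibrewise maps $f:E\rightarrow E'$ and $g:E'\rightarrow E$ together with a fibrewise homotopy $D:E\times I\rightarrow E$ realizing $gf\simeq_B id_E$, say $D_0=gf$ and $D_1=id_E$. Suppose we are given $A\subset B$, a section $s$ over $A$, and an extension $\bar s$ of $s$ to a halo $V$ around $A$ with associated function $\tau:B\rightarrow[0,1]$, so that $A\subset\tau^{-1}(1)$ and $B\setminus V\subset\tau^{-1}(0)$.

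First I would push the data forward: since $f$ is fibrewise, $f\bar s:V\rightarrow E'$ is a section of $p'$ over the halo $V$ that extends the section $fs$ over $A$. Applying the SEP of $p'$ to $fs$ (which, as just observed, extends over the halo $V$) yields a global section $t:B\rightarrow E'$ with $t|_A=fs$. Pulling back by $g$, the composite $gt:B\rightarrow E$ is a global section of $p$, and on $A$ we have $gt|_A=gfs=D_0(s)$.

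The final and most delicate step is to correct $gt$ so that it restricts to $s$ rather than to $gfs=D_0(s)$ on $A$. The natural idea is to interpolate, using the homotopy $D$ and the halo function $\tau$, between the globally defined section $gt$ (away from $A$, where $\tau=0$) and the deformation $t\mapsto D(\bar s,t)$ that carries $gf\bar s$ to $\bar s$ (near $A$, where $\tau=1$ and $D_1(s)=s$). I expect \emph{this gluing to be the main obstacle}: the SEP only guarantees $t|_A=fs$, so $gt$ agrees with $gf\bar s$ on $A$ but not on the whole of $V$, and a careless interpolation will fail to be continuous across the transition zone $0<\tau<1$. To get around this I would confine the correction to the halo, deforming $\bar s$ by the homotopy in a collar governed by $\tau$ and matching it to $gt$ only where the two sections provably coincide; shrinking the halo (or inserting a nested halo inside $V$) should force the two descriptions to agree on their overlap, producing a well-defined global section $S$ with $S|_A=s$. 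Since every $D_t$ is fibrewise, $S$ is automatically a section of $p$, which establishes the SEP for $p$.

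Finally, the particular case is immediate from the general one. A shrinkable space over $B$ has, by definition, the fibrewise homotopy type of $id_B:B\rightarrow B$, hence is dominated by $id_B$. As $id_B$ trivially has the SEP (its only sections are inclusions, and the inclusion $A\hookrightarrow B$ always extends to the global section $id_B$), the main statement yields that every shrinkable space has the SEP.
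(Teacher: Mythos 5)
The paper offers no proof of this proposition at all: it is quoted verbatim from Dold \cite[Prop.~2.3]{D}, so the only available comparison is with Dold's original argument, whose overall strategy (push forward by $f$, apply the SEP of $p'$, pull back by $g$, repair with the domination homotopy $D$) is exactly the one you outline. The problem is that the step you yourself flag as ``the main obstacle'' is left as a hope rather than an argument, and as you have set things up it genuinely fails. You apply the SEP of $p'$ to the section $fs$ over $A$, which only pins $t$ down on $A$ itself; hence $gt$ and $gf\bar s$ agree only on $A$, a set with no interior in general, so there is no overlap region on which an interpolation $b\mapsto D(\bar s(b),\varphi(\tau(b)))$ can be matched continuously with $gt$. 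Shrinking the halo does not help: however small a halo you take, the SEP still only guarantees $t|_A=fs$.

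The correct repair is to \emph{enlarge $A$}, not shrink $V$, before invoking the SEP of $p'$. Put $A_1=\tau^{-1}([\tfrac12,1])$; then $V$ is still a halo around $A_1$ (with halo function $\min(2\tau,1)$), and $f\bar s|_{A_1}$ is a section over $A_1$ that extends to the section $f\bar s$ over $V$. The SEP of $p'$ now gives a global section $t$ with $t=f\bar s$ on all of $A_1$. Define
$$S(b)=\begin{cases} g(t(b)), & \tau(b)\le \tfrac12,\\ D\bigl(\bar s(b),\,2\tau(b)-1\bigr), & \tau(b)\ge \tfrac12.\end{cases}$$
The two closed sets $\tau^{-1}([0,\tfrac12])$ and $\tau^{-1}([\tfrac12,1])$ cover $B$, the second lies in $V$ (because $B\setminus V\subset\tau^{-1}(0)$), and on their intersection $\tau^{-1}(\tfrac12)\subset A_1$ both formulas give $gf\bar s(b)=D(\bar s(b),0)$; so $S$ is continuous, it is a section because $g$, $t$ and each $D_t$ are fibrewise, and on $A$ (where $\tau=1$) it equals $D_1\bar s=\bar s=s$. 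Your deduction of the special case (a shrinkable space is dominated by $id_B$, which trivially has the SEP) is correct as stated.
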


\begin{lemma}\label{ojoaldato}\cite[Cor.2.7]{D} {\rm
Let $p:E\rightarrow B$ a continuous map and $\{V_{\lambda
}\}_{\lambda \in \Lambda }$ a numerable covering of $B.$ If each
restriction $p_{\lambda }:p^{-1}(V_{\lambda })\rightarrow
V_{\lambda }$ is shrinkable (over $V_{\lambda }$) then $p$ is also
shrinkable. \hfill $\square $}
\end{lemma}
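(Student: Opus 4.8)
The plan is to route everything through the \emph{Section Extension Property}, using the equivalence, for maps over $B$, between being shrinkable and having the SEP. Proposition \ref{one} already gives the direction needed for the hypotheses: since each restriction $p_\lambda$ is shrinkable, each $p_\lambda$ has the SEP. The lemma then reduces to the purely local-to-global statement \emph{that if $\{V_\lambda\}_{\lambda\in\Lambda}$ is numerable and $p$ has the SEP over every $V_\lambda$, then $p$ has the SEP over $B$}; once this is known, the complementary implication SEP $\Rightarrow$ shrinkable finishes the proof. This last implication is the other half of Dold's equivalence and I would simply quote it from \cite{D} (a section is produced by the SEP with $A=\emptyset$, and a further application yields the fibrewise contraction of $E$ onto that section). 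The real content is therefore the localization of the SEP along a numerable covering.

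First I would replace the given covering by a countable one assembled from disjoint pieces. Fix a partition of unity $\{u_\lambda\}_{\lambda\in\Lambda}$ subordinate to $\{V_\lambda\}$, so that $\mathrm{supp}(u_\lambda)\subset V_\lambda$, and for each finite nonempty $T\subset\Lambda$ set
$$V_T=\{b\in B:\ u_\lambda(b)>u_\mu(b)\ \text{for all}\ \lambda\in T\ \text{and all}\ \mu\notin T\}.$$
Local finiteness of $\{u_\lambda\}$ makes each $V_T$ open, and since $u_\lambda>0$ on $V_T$ for $\lambda\in T$ one even gets $\overline{V_T}\subset\mathrm{supp}(u_\lambda)\subset V_\lambda$. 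For $n\geq 1$ let $W_n$ be the union of the $V_T$ with $|T|=n$; the $V_T$ of a fixed cardinality are pairwise disjoint (so each is clopen in $W_n$, exhibiting $W_n$ as their topological coproduct), and the $W_n$ cover $B$, since at any point $b$ one may take $T$ to be the set of indices where $u_\lambda(b)$ attains its maximum. Thus each $W_n$ is a disjoint union of open sets whose closures lie in members of the original cover; using this containment of closures, together with the functions $u_\lambda$ to manufacture the halos demanded by the definition, one checks that $p$ has the SEP over each $W_n$, and the problem is reduced to the countable covering $\{W_n\}$.

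The core is then the countable case, which I would settle by an inductive extension followed by a passage to the limit. Given $A\subset B$ and a section over a halo $V\supset A$ to be extended, I would extend it successively over $A$, over $A\cup W_1$, over $A\cup W_1\cup W_2$, and so on, using at the $n$-th stage the SEP of $p$ over $W_n$ to carry the section already built across $W_n$; the functions $u_\lambda$ supply at each stage the intermediate halos that make the SEP applicable and keep consecutive extensions compatible on overlaps. The main obstacle --- and the only genuinely delicate point --- is the limit step: the successive halos must be organized so that the resulting section is globally well defined and, above all, \emph{continuous} on $B$. This is exactly where local finiteness of $\{u_\lambda\}$ is indispensable, for it ensures that near each point only finitely many stages modify the section, reducing continuity to a local and hence verifiable condition. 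Once the limit section is seen to be continuous, $p$ has the SEP over $B$, and the equivalence invoked at the outset yields that $p$ is shrinkable.
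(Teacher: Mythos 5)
Your middle section --- the reduction of the covering to the countable family $\{W_n\}$ via the sets $V_T$ and the inductive extension of sections --- is the standard proof of Dold's Section Extension Theorem and is fine in outline (the paper itself offers no proof of this lemma; it only cites \cite[Cor.2.7]{D}, so the comparison is really with Dold's argument). The genuine gap is in your first and last sentences: shrinkable and SEP are \emph{not} equivalent for a map over $B$. Proposition \ref{one} gives only one implication (shrinkable $\Rightarrow$ SEP), and the converse is false: over $B=\ast$ the only halo of $A=\ast$ is $B$ itself, so a map $E\to\ast$ has the SEP as soon as $E\neq\emptyset$, whereas it is shrinkable only if $E$ is contractible. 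Your parenthetical repair --- ``a further application yields the fibrewise contraction of $E$ onto that section'' --- does not work, because the fibrewise contraction $1_E\simeq_B s\circ p$ is not a section of $p$: it is (by adjunction) a section over $E$ of the auxiliary map $\pi\colon W\to E$, $\pi(\omega)=\omega(0)$, where $W$ is the space of vertical paths in $E$ ending on $s(B)$.

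What is missing is therefore a \emph{second} application of the section extension theorem, this time to $\pi$ over $E$ with the numerable covering $\{p^{-1}(V_\lambda)\}$ obtained by pulling the partition of unity back along $p$. Verifying that $\pi$ restricted over $p^{-1}(V_\lambda)$ has the SEP uses the full shrinkability of $p_\lambda$ (one reroutes the local contraction, which ends on a local section $s_\lambda$, so that it ends on the global section $s$, using that any two sections of a shrinkable map are vertically homotopic); mere SEP of $p_\lambda$ is not enough. Indeed, as written your argument uses nothing about the $p_\lambda$ beyond their SEP, so if it were complete it would prove the false statement ``if $p$ has the SEP over each member of a numerable covering, then $p$ is shrinkable'' (take the covering $\{B\}$ and $E=S^1\to B=\ast$).
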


And the last technical previous result. Let
$$\xymatrix{
{X} \ar[rr]^f \ar[dr]_{p} & & {Y} \ar[dl]^q \\
 & {B}  & }$$
\noindent be a fibrewise map. We can consider the following
subspace of $X\times Y^I:$
$$R=\{(x,\gamma )\in X\times Y^I:p(x)=q\gamma(t),\forall t\in I,
\hspace{3pt}\mbox{and}\hspace{3pt}\gamma (1)=f(x)\}$$ \noindent
together with the map $q:R\rightarrow Y$ defined as $q(x,\gamma
)=\gamma(0).$

\begin{lemma}\label{principal}\cite[Lem.3.4]{D} {\rm
If $f:X\rightarrow Y$ is a homotopy equivalence over $B,$ then
$q:R\rightarrow Y$ is shrinkable. \hfill  $\square $}
\end{lemma}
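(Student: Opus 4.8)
The space $R$ is the fibrewise mapping path space of $f$, and $q=\mathrm{ev}_0\colon R\to Y$ is the standard replacement of $f$ by a fibration over $Y$; accordingly the plan is to produce a fibrewise section of $q$ and then contract $R$ fibrewise onto its image. Since the only fibrewise map $(R,q)\to (Y,id_Y)$ over $Y$ is $q$ itself, proving that $q$ is shrinkable reduces to exhibiting a section $s\colon Y\to R$ over $Y$ (so $qs=id_Y$) together with a homotopy $sq\simeq_Y id_R$ over $Y$; the identity $qs=id_Y$ then supplies the remaining half of a fibrewise homotopy equivalence between $(R,q)$ and $id_Y\colon Y\to Y$.

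First I would fix the data given by the hypothesis. As $f$ is a homotopy equivalence over $B$, choose a fibrewise homotopy inverse $g\colon Y\to X$ (so $pg=q$) and fibrewise homotopies $H\colon id_Y\simeq_B fg$ and $D\colon id_X\simeq_B gf$, normalised by $H(y,0)=y$, $H(y,1)=fg(y)$, $D(x,0)=x$, $D(x,1)=gf(x)$. Define $s(y)=(g(y),H(y,-))$. Then $H(y,-)$ is a path from $y$ to $fg(y)=f(g(y))$ lying in the fibre over $q(y)=p(g(y))$, so $s(y)\in R$, and $q\,s(y)=H(y,0)=y$, i.e. $s$ is a section over $Y$. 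Crucially, $g$, $H$ and $D$ all restrict to the fibre over each $b\in B$, which is what will keep the homotopy below inside $R$.

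It remains to build the fibrewise contraction $sq\simeq_Y id_R$. Given $(x,\gamma)\in R$ with $y=\gamma(0)$ and $b=p(x)=q(y)$, the natural candidate drags the point $x$ to $g(y)$ along the track in $p^{-1}(b)$ that first runs $D(x,-)$ from $x$ to $gf(x)=g(\gamma(1))$ and then $g\circ\gamma$ backwards from $g(\gamma(1))$ to $g(\gamma(0))=g(y)$, while simultaneously growing $\gamma$ at its far end so that the moving endpoint always equals $f$ of the current point. Keeping $\gamma(0)=y$ pinned is exactly the requirement that the homotopy be fibrewise over $Y$, and keeping the endpoint equal to $f(\text{point})$ (with $D$, $g$ and $\gamma$ lying over $b$) keeps the homotopy inside $R$.

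The main obstacle is the final step: this natural deformation ends not at $s(y)=(g(y),H(y,-))$ but at $(g(y),\,\gamma * fD(x,-) * \overline{fg\gamma}\,)$, a path with the correct ends $y$ and $fg(y)$ that coincides with $H(y,-)$ only up to a homotopy rel endpoints in the fibre, and for arbitrary $g,H,D$ no such homotopy need exist. I would dispose of this by first upgrading $f$ to a coherent fibrewise homotopy equivalence, that is, by choosing $g,H,D$ to satisfy a fibrewise triangle identity $fD\simeq Hf$ rel endpoints (always possible by the usual half-adjoint argument run fibrewise over $B$). The coherence yields a canonical fibrewise homotopy, rel the endpoints $y$ and $fg(y)$, from the concatenated path to $H(y,-)$; splicing it in as the last stage keeps the point fixed at $g(y)$ and so stays over $Y$ and inside $R$, landing exactly on $s$. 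The remaining verifications, namely continuity and naturality in $(x,\gamma)$ of the resulting map $R\times I\to R$, are routine reparametrisation checks, and complete the proof that $q\colon R\to Y$ is shrinkable.
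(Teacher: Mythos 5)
The paper does not actually prove this lemma: it is imported verbatim from Dold's article, the citation [Lem.~3.4] of \cite{D} standing in for the proof, so there is no in-paper argument to measure yours against. Judged on its own, your direct construction is essentially correct and is, in substance, the classical argument. The reduction of shrinkability to ``a section $s$ with $sq\simeq_Y id_R$'' is right (the other composite is the identity on the nose), the section $s(y)=(g(y),H(y,-))$ does land in $R$ and satisfies $qs=id_Y$, and the drag homotopy from $id_R$ to $(x,\gamma)\mapsto\bigl(g(\gamma(0)),\,\gamma * fD(x,-)*\overline{fg\gamma}\bigr)$ stays over $Y$ (the initial point of the path is never moved) and inside $R$ (all tracks lie in the fibre over $p(x)$, and the free endpoint is kept equal to $f$ of the moving point). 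You have also correctly isolated the one genuine subtlety: the terminal path is not $H(\gamma(0),-)$, and since the square $(s,t)\mapsto H(\gamma(s),t)$ already yields $H(\gamma(0),-)\simeq \gamma * H(f(x),-)*\overline{fg\gamma}$ rel endpoints and fibrewise, the missing homotopy exists, naturally in $(x,\gamma)$, exactly when $fD\simeq Hf$ rel $X\times\partial I$ as fibrewise homotopies --- that is, when $(g,H,D)$ is a half-adjoint (coherent) fibrewise homotopy equivalence. That upgrade is indeed always available: the usual proof is a formal manipulation of homotopy classes of homotopies and runs unchanged in the $2$-category of spaces over $B$, fibrewise maps, and fibrewise track classes. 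It is, however, the load-bearing step of your argument and is currently only asserted; I would either write out the standard replacement of $H$ that forces the triangle identity or cite it explicitly (Vogt's lemma; Baues's \emph{Algebraic Homotopy} establishes it for any $I$-category, which covers $\mathbf{Top}_B$). With that reference supplied, the remaining continuity and reparametrisation checks are genuinely routine and your proof is complete; it is an explicit, self-contained alternative to simply quoting Dold, which is all the paper does.
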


Finally, we establish our theorem as application. Consider
$f:X\rightarrow Y$ any exterior map over a fixed exterior space
$B$
$$\xymatrix{
{X} \ar[rr]^f \ar[dr]_p & & {Y} \ar[dl]^q \\
 & {B}  & }$$
If $\{B_{\alpha }\}_{\alpha \in A}$ is covering of $B$ we will
write, for each $\alpha $
$$\xymatrix{ {X_{\alpha }}
\ar[rr]^{f_{\alpha }} \ar[dr]_{p_{\alpha }}
& & {Y_{\alpha }} \ar[dl]^{q_{\alpha }} \\
 & {B_{\alpha }}  & }$$
\noindent where $X_{\alpha }=p^{-1}(B_{\alpha }),$ $Y_{\alpha
}=q^{-1}(B_{\alpha })$ and $f_{\alpha },p_{\alpha }$ and
$q_{\alpha }$ denote the natural restrictions. \ \par
\bigskip

\begin{theorem}\label{first}
{\rm  If $\{B_{\alpha }\}_{\alpha \in A}$ is an e-numerable
covering of $B$ and each $f_{\alpha }$ is an exterior homotopy
equivalence over $B_{\alpha },$ then $f:X\rightarrow Y$ is an
exterior homotopy equivalence over $B.$}
\end{theorem}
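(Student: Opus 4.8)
The plan is to transport the whole statement into the category $\mathbf{Top}^{\infty}$ through the equivalence $(-)^{\infty}$, and there reduce it to Dold's classical local-to-global theorem for fibrewise homotopy equivalences, which is exactly what Lemmas \ref{one}, \ref{ojoaldato} and \ref{principal} assemble. By Proposition \ref{traducc} it suffices to prove that $f^{\infty}\colon X^{\infty}\to Y^{\infty}$ is an $\infty$-homotopy equivalence over $B^{\infty}$; and by Definition \ref{e-numerable} the family $\{B_{\alpha}^{\infty}\}=\{B_{\alpha}\cup\{\infty\}\}$ is a numerable covering of $B^{\infty}$, with each restriction $f_{\alpha}^{\infty}$ an $\infty$-homotopy equivalence over $B_{\alpha}^{\infty}$, again by Proposition \ref{traducc}.

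The decisive observation is that over $B^{\infty}$ the $\infty$-structure is automatic. Since $p^{\infty}$ and $q^{\infty}$ are $\infty$-maps, the fibres over the base point satisfy $(p^{\infty})^{-1}(\infty)=\{\infty\}=(q^{\infty})^{-1}(\infty)$. Hence any fibrewise map $g\colon Y^{\infty}\to X^{\infty}$ over $B^{\infty}$ satisfies $g^{-1}(\infty)=\{\infty\}$, and any fibrewise homotopy $H$ over $B^{\infty}$ satisfies $H^{-1}(\infty)=\{\infty\}\times I$: if $H(x,t)=\infty$ then $p^{\infty}(x)=q^{\infty}H(x,t)=\infty$, forcing $x=\infty$, while the fibre over $\infty$ being a single point forces $H(\infty,t)=\infty$. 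Consequently an ordinary fibrewise homotopy equivalence over $B^{\infty}$ is precisely an $\infty$-homotopy equivalence over $B^{\infty}$, so the problem collapses to the purely classical one of showing that $f^{\infty}$ is a fibrewise homotopy equivalence over $B^{\infty}$.

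Next I would run Dold's argument. Pulling back the numerable covering $\{B_{\alpha}^{\infty}\}$ along $q^{\infty}$ (precomposing a subordinate partition of unity with $q^{\infty}$) gives a numerable covering $\{Y_{\alpha}^{\infty}\}$ of $Y^{\infty}$, where $Y_{\alpha}^{\infty}=(q^{\infty})^{-1}(B_{\alpha}^{\infty})$. Form the space $R$ of Lemma \ref{principal} attached to $f^{\infty}$ together with $q\colon R\to Y^{\infty}$. A direct check identifies $q^{-1}(Y_{\alpha}^{\infty})$ with the space $R_{\alpha}$ associated to $f_{\alpha}^{\infty}$: the relation $q^{\infty}\gamma(t)=p^{\infty}(x)$ forces any path $\gamma$ that meets $Y_{\alpha}^{\infty}$ to lie entirely in $Y_{\alpha}^{\infty}$ and $x$ to lie in $X_{\alpha}^{\infty}$. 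Since each $f_{\alpha}^{\infty}$ is a homotopy equivalence over $B_{\alpha}^{\infty}$, Lemma \ref{principal} gives that each $q_{\alpha}\colon R_{\alpha}\to Y_{\alpha}^{\infty}$ is shrinkable, and then Lemma \ref{ojoaldato}, applied to the numerable covering $\{Y_{\alpha}^{\infty}\}$ of $Y^{\infty}$, shows that $q\colon R\to Y^{\infty}$ is shrinkable.

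Finally, a section $s$ of the shrinkable map $q\colon R\to Y^{\infty}$ (which exists by Proposition \ref{one}) yields a fibrewise map $g\colon Y^{\infty}\to X^{\infty}$ over $B^{\infty}$ whose accompanying fibrewise path gives $f^{\infty}g\simeq_{B^{\infty}} id$; thus $f^{\infty}$ has a fibrewise homotopy right inverse. Each restriction $g_{\alpha}$ is then a homotopy equivalence over $B_{\alpha}^{\infty}$ (since $f_{\alpha}^{\infty}g_{\alpha}\simeq id$ and $f_{\alpha}^{\infty}$ is already an equivalence), so the same construction applied to $g$ produces a right inverse $h$ with $gh\simeq_{B^{\infty}} id$; the usual two-out-of-three manipulation ($f\simeq (fg)h=f(gh)\simeq f$ giving $gf\simeq gh\simeq id$) upgrades $g$ to a genuine fibrewise inverse of $f^{\infty}$. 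By the decisive observation every homotopy here is automatically an $\infty$-homotopy, so $f^{\infty}$ is an $\infty$-homotopy equivalence over $B^{\infty}$ and, via Proposition \ref{traducc}, $f$ is an exterior homotopy equivalence over $B$. The only genuinely nonformal points—and the ones I would verify most carefully—are the identification $q^{-1}(Y_{\alpha}^{\infty})=R_{\alpha}$ and the automatic preservation of the base point; once these are secured, the proof is a faithful transcription of Dold's.
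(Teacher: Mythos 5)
Your proof is correct and follows essentially the same route as the paper: transport everything to $\mathbf{Top}^{\infty}$ via Proposition \ref{traducc}, apply Lemmas \ref{principal} and \ref{ojoaldato} to the mapping path space $R$ over the numerable covering $\{Y_{\alpha}^{\infty}\}$ to obtain a section giving a right homotopy inverse, and finish with the standard two-out-of-three manipulation. The only (cosmetic) difference is in packaging the base-point condition: you observe once and for all that fibrewise maps and homotopies over $B^{\infty}$ are automatically $\infty$-maps because the fibre over $\infty$ is a single point, whereas the paper checks directly that $q\colon R\rightarrow Y$ is an $\infty$-map and that any section of it is therefore an $\infty$-map --- the two verifications are the same observation in different clothing.
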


\begin{proof}
By Proposition \ref{traducc} we can assume that we are working in
$\mathbf{Top^{\infty }},$ and by abuse of language we can use the
same notation for the corresponding objects and maps. That is, we
can think that
$$\xymatrix{
{X} \ar[rr]^f \ar[dr]_p & & {Y} \ar[dl]^q \\
 & {B}  & }$$
\noindent is a commutative diagram in $\mathbf{Top^{\infty }}$ and
$\{B_{\alpha }\}_{\alpha \in A}$ is a classical numerable covering
of $B$ such that the base point $b_0\in B$ belongs to each
$B_{\alpha } .$ Moreover, each $f_{\alpha }$ is an $\infty
$-homotopy equivalence over $B_{\alpha }.$ We shall denote by
$x_0$ and $y_0$ the respective base points of $X$ and $Y.$

Now, for every $\alpha \in A,$ let $f_{\alpha}^{-}:Y_{\alpha
}\rightarrow X_{\alpha}$ denote a fibrewise $\infty $-homotopy
inverse. As the fibrewise homotopy equivalences in
$\mathbf{Top^{\infty }}$ are, in particular, classical fibrewise
homotopy equivalences, we can use the previous results. By Lemma
\ref{principal} we have that each $q_{\alpha }:R_{\alpha
}\rightarrow X_{\alpha }$ is shrinkable, where
$$R_{\alpha }=\{(x,\gamma)\in X_{\alpha}\times
Y_{\alpha }^I:p_{\alpha}(x)=q_{\alpha }\gamma
(t),\hspace{3pt}\forall t\in I
 \hspace{3pt}\mbox{and}\hspace{3pt}\gamma (1)=f_{\alpha}(x)\}$$
Therefore, by Lemma \ref{ojoaldato} we have that
 $q:R\rightarrow Y$ is also shrinkable. In particular $q$ has the
SEP so there exists a section $S=(f',\theta ):Y\rightarrow R.$

First of all we observe that $q:R\rightarrow Y$ is an $\infty
 $-map, being $(x_0,C_{y_0})$ the base point of
 $R$ (here $C_{y_0}$ denotes the constant path).
 Indeed, if $(x,\gamma)\in R$ satisfies
 $q(x,\gamma)=\gamma(0)=y_0$ then we have
 $$p(x)=q(\gamma (0))=q(y_0)=b_0$$ \noindent so that
 $x=x_0.$ On the other hand, for any $t\in I$ we have
 $$q(\gamma (t))=p(x)=p(x_0)=b_0$$ \noindent so
 $\gamma =C_{y_0}$ is the constant path and we conclude that
 $(x,\gamma)=(x_0,C_{y_0}).$

Being $S$ a section of the $\infty $-map $q$ we also observe that,
necessarily $S:Y\rightarrow R$ must be an $\infty $-map. Indeed,
if $S(y)=(x_0,C_{y_0})$ then $y=qS(y)=q(x_0,C_{y_0})=y_0,$ so that
$S^{-1}(\{(x_0,C_{y_0})\})=\{y_0\}.$

Recall that $S$ is of the form $S(y)=(f'(y),\theta (y))$ where
$f'$ is a map $f':Y\rightarrow X$ over $B$ and $\theta $ is a map
$\theta :Y\rightarrow X^I,$ which induces, in a natural way, a
homotopy $\Theta :Y\times I\rightarrow X$ over $B$, given by
$\Theta (y,t)=\theta (y)(t).$ Since $S$ is an $\infty $-map it is
straightforward to check that $f'$ is also an $\infty $-map and
that $\Theta ^{-1}(\{x_0\})=\{y_0\}\times I.$ Moreover, $\Theta
:id_Y\simeq _B ff'$ in $\mathbf{(Top^{\infty })_B}.$

It only remains to prove that there exists $\Theta ':id_{X}\simeq
_B f'f$ in $\mathbf{(Top^{\infty })_B}.$ Applying the above
reasoning to $f'$ we have that there exist $f'':X\rightarrow Y$
over $B$ and $\Theta ':id_{X}\simeq _B f'f''$ in
$\mathbf{(Top^{\infty })_B}.$ But
$$f'f''=f'id_Yf''\simeq _Bf'ff'f''\simeq _Bf'fid_{X}=f'f$$
\noindent so  $id_{X}\simeq _B f'f.$
\end{proof}

This result has a very interesting proper counterpart. Indeed,
consider $f:X\rightarrow Y$ any proper map over a fixed space $B,$
that is, a commutative diagram of proper maps
$$\xymatrix{
{X} \ar[rr]^f \ar[dr]_p & & {Y} \ar[dl]^q \\
 & {B}  & }$$
Then we obtain as a corollary the corresponding theorem in the
proper setting:

\begin{theorem}\label{first-proper}{\rm
If $\{B_{\alpha }\}_{\alpha \in A}$ is a closed p-numerable
covering of $B$ and each restriction
$$\xymatrix{
{p^{-1}(B_{\alpha })} \ar[rr]^{f_{\alpha }} \ar[dr]_{p_{\alpha }}
& & {q^{-1}(B_{\alpha })} \ar[dl]^{q_{\alpha }} \\
 & {B_{\alpha }}  & }$$ is a proper homotopy equivalence over $B_{\alpha },$
then $f:X\rightarrow Y$ is a proper homotopy equivalence over $B.$
\hfill $\square $}
\end{theorem}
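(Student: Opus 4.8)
The plan is to deduce this statement from its exterior counterpart, Theorem~\ref{first}, by transporting the whole diagram along the full embedding $(-)_{cc}\colon\PP\hookrightarrow\E$. Applying $(-)_{cc}$ to the given commutative triangle yields an exterior map $f_{cc}\colon X_{cc}\to Y_{cc}$ over $B_{cc}$, and the strategy is to check that this data satisfies the hypotheses of Theorem~\ref{first}, conclude that $f_{cc}$ is an exterior homotopy equivalence over $B_{cc}$, and then translate back to $\PP$.

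Two hypotheses must be verified. First, the covering: by Definition~\ref{e-numerable} a p-numerable covering of $B$ is precisely an e-numerable covering of the cocompact exterior space $B_{cc}$, so $\{B_{\alpha}\}_{\alpha\in A}$ is directly an e-numerable covering of $B_{cc}$, as required. Second, the restrictions: I must identify the proper restriction $f_{\alpha}\colon p^{-1}(B_{\alpha})\to q^{-1}(B_{\alpha})$ over $B_{\alpha}$, after applying $(-)_{cc}$, with the exterior restriction $(f_{cc})_{\alpha}$ appearing in Theorem~\ref{first}. The latter is formed using the relative (pullback) externology induced from $X_{cc}$, $Y_{cc}$ and $B_{cc}$, whereas the former carries the cocompact externology of the spaces $p^{-1}(B_{\alpha})$, $q^{-1}(B_{\alpha})$ and $B_{\alpha}$ themselves.

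The main point, and the place where the word \emph{closed} is indispensable, is the verification that for a closed subspace $A\subseteq Z$ the cocompact externology $\ex_{cc}^A$ coincides with the relative externology $\{E\cap A: E\in\ex_{cc}^Z\}$. Indeed, if $E=Z\setminus K$ with $K$ closed-compact in $Z$, then $E\cap A=A\setminus(K\cap A)$ and $K\cap A$ is a closed-compact subset of $A$; conversely, if $L$ is closed-compact in $A$ then, $A$ being closed in $Z$, the set $L$ is closed-compact in $Z$, so $A\setminus L=(Z\setminus L)\cap A$ lies in the relative externology. Since $B_{\alpha}$ is closed in $B$ and $p,q$ are continuous, $p^{-1}(B_{\alpha})$ and $q^{-1}(B_{\alpha})$ are closed in $X$ and $Y$; applying this identification three times shows that the exterior restrictions $X_{\alpha}$, $Y_{\alpha}$ and $B_{\alpha}$ of Theorem~\ref{first}, carrying relative externologies from $X_{cc}$, $Y_{cc}$ and $B_{cc}$, coincide with $(p^{-1}(B_{\alpha}))_{cc}$, $(q^{-1}(B_{\alpha}))_{cc}$ and $(B_{\alpha})_{cc}$ respectively, whence $(f_{\alpha})_{cc}=(f_{cc})_{\alpha}$.

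With this identification in hand, the Proposition relating proper and exterior homotopy equivalences over a base converts each hypothesis ``$f_{\alpha}$ is a proper homotopy equivalence over $B_{\alpha}$'' into ``$(f_{cc})_{\alpha}$ is an exterior homotopy equivalence over $(B_{\alpha})_{cc}$''. Theorem~\ref{first} then yields that $f_{cc}$ is an exterior homotopy equivalence over $B_{cc}$, and one final application of that same Proposition, now in the reverse direction, shows that $f\colon X\to Y$ is a proper homotopy equivalence over $B$. The only genuine work is the externology-matching observation of the previous paragraph; everything else is bookkeeping through the embedding $(-)_{cc}$.
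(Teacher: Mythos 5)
Your proposal is correct and follows exactly the route the paper intends: Theorem~\ref{first-proper} is stated there as an immediate corollary of Theorem~\ref{first} via the embedding $(-)_{cc}$, Definition~\ref{e-numerable}, and the proposition identifying proper and exterior fibrewise homotopy equivalences. In fact you supply the one detail the paper leaves implicit, namely that closedness of $B_{\alpha}$ (hence of $p^{-1}(B_{\alpha})$ and $q^{-1}(B_{\alpha})$) is what makes the relative externology agree with the cocompact one, so that the proper restrictions match the exterior restrictions appearing in Theorem~\ref{first}.
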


\end{document}